\def\NZQ{\mathbb}               % the font for N,Z,Q,R,C
\def\QQ{{\NZQ Q}}
\def\ZZ{{\NZQ Z}}
\def\RR{{\NZQ R}}
\def\frk{\mathfrak}               % font for "Fraktur"
\def\Phi{{\frk N}}
\def\eb{{\bold e}}
\def\opn#1#2{\def#1{\operatorname{#2}}} % to make operators
\opn\chara{char} 
\opn\length{\ell} 
\opn\pd{pd} 
\opn\rk{rk}
\opn\projdim{proj\,dim} 
\opn\injdim{inj\,dim} 
\opn\rank{rank}
\opn\depth{depth} 
\opn\grade{grade} 
\opn\height{height}
\opn\embdim{emb\,dim} 
\opn\codim{codim}
\opn\Tr{Tr} 
\opn\bigrank{big\,rank}
\opn\superheight{superheight}
\opn\lcm{lcm}
\opn\trdeg{tr\,deg}%\emph{
\opn\reg{reg} 
\opn\lreg{lreg} 
\opn\ini{in} 
\opn\lpd{lpd}
\opn\size{size}
\opn\mult{mult}
\opn\dist{dist}
\opn\cone{cone}
\opn\lex{lex}
\opn\rev{rev}
\opn\div{div} \opn\Div{Div} \opn\cl{cl} \opn\Cl{Cl}
\opn\Spec{Spec} \opn\Supp{Supp} \opn\supp{supp} \opn\Sing{Sing}
\opn\Ass{Ass} \opn\Min{Min}
\opn\Ann{Ann} \opn\Rad{Rad} \opn\Soc{Soc}
\opn\Syz{Syz} \opn\Im{Im} \opn\Ker{Ker} \opn\Coker{Coker}
\opn\Am{Am} \opn\Hom{Hom} \opn\Tor{Tor} \opn\Ext{Ext}
\opn\End{End} \opn\Aut{Aut} \opn\id{id} \opn\ini{in}
\opn\nat{nat}
\opn\pff{pf}%   \pf exists already
\opn\Pf{Pf} \opn\GL{GL} \opn\SL{SL} \opn\mod{mod} \opn\ord{ord}
\opn\Gin{Gin}
\opn\Hilb{Hilb}\opn\adeg{adeg}\opn\std{std}\opn\ip{infpt}
\opn\Pol{Pol}
\opn\sat{sat}
\opn\Var{Var}
\opn\Gen{Gen}
\opn\aff{aff} \opn\con{conv} \opn\relint{relint} \opn\st{st}
\opn\lk{lk} \opn\cn{cn} \opn\core{core} \opn\vol{vol}
\opn\link{link} \opn\star{star}
\opn\gr{gr}
\def\Ac{{\mathcal A}}
\def\Hc{{\mathcal H}}
\def\Jc{{\mathcal J}}
\def\Gc{{\mathcal G}}
\def\Fc{{\mathcal F}}
\def\Oc{{\mathcal O}}
\def\Pc{{\mathcal P}}
\def\Qc{{\mathcal Q}}
\def\Cc{{\mathcal C}}
\def\Vol{{\textnormal{Vol}}}
\def\pot#1#2{#1[\kern-0.28ex[#2]\kern-0.28ex]}
\opn\dirlim{\underrightarrow{\lim}}
\opn\inivlim{\underleftarrow{\lim}}
\let\to=\rightarrow
\def\Implies{\ifmmode\Longrightarrow \else
	\unskip${}\Longrightarrow{}$\ignorespaces\fi}
\def\implies{\ifmmode\Rightarrow \else
	\unskip${}\Rightarrow{}$\ignorespaces\fi}
\def\iff{\ifmmode\Longleftrightarrow \else
	\unskip${}\Longleftrightarrow{}$\ignorespaces\fi}
\newtheorem{Theorem}{Theorem}[section]
\newtheorem{Corollary}[Theorem]{Corollary}
\newtheorem{Proposition}[Theorem]{Proposition}
\newtheorem{Example}[Theorem]{Example}
\let\epsilon\varepsilon
\let\phi=\varphi
\let\kappa=\varkappa
\def\qed{\ifhmode\textqed\fi
	\ifmmode\ifinner\quad\qedsymbol\else\dispqed\fi\fi}
\def\textqed{\unskip\nobreak\penalty50
	\hskip2em\hbox{}\nobreak\hfil\qedsymbol
	\parfillskip=0pt \finalhyphendemerits=0}
\def\dispqed{\rlap{\qquad\qedsymbol}}
\opn\dis{dis}
\opn\height{height}
\opn\dist{dist}
\def\pnt{{\raise0.5mm\hbox{\large\bf.}}}
\opn\Lex{Lex}
\opn\conv{conv}
\begin{document}
%%%%%%%%%%%%%%%%%%%%%%%%%%%%%%%%%%%%%%%%
%% title
%%%%%%%%%%%%%%%%%%%%%%%%%%%%%%%%%%%%%%%%
\title{Gorenstein Fano polytopes arising from order polytopes and chain polytopes}
\author[T. Hibi]{Takayuki Hibi}
\address[Takayuki Hibi]{Department of Pure and Applied Mathematics,
	Graduate School of Information Science and Technology,
	Osaka University,
	Toyonaka, Osaka 560-0043, Japan}
\email{hibi@math.sci.osaka-u.ac.jp}
\author[K. Matsuda]{Kazunori Matsuda}
\address[Kazunori Matsuda]{Department of Pure and Applied Mathematics,
	Graduate School of Information Science and Technology,
	Osaka University,
	Toyonaka, Osaka 560-0043, Japan}
\email{kaz-matsuda@math.sci.osaka-u.ac.jp}
\author[A. Tsuchiya]{Akiyoshi Tsuchiya}
\address[Akiyoshi Tsuchiya]{Department of Pure and Applied Mathematics,
	Graduate School of Information Science and Technology,
	Osaka University,
	Toyonaka, Osaka 560-0043, Japan}
\email{a-tsuchiya@cr.math.sci.osaka-u.ac.jp}

\subjclass[2010]{13P10, 52B20}
\date{}
\keywords{order polytope, chain polytope, Ehrhart polynomial, smooth Fano polytope, unimodular equivalence}

\begin{abstract}
Richard Stanley introduced the order polytope $\mathcal{O}(P)$ 
and the chain polytope $\mathcal{C}(P)$ 
arising from a finite partially ordered set $P$, and showed that 
the Ehrhart polynomial of $\mathcal{O}(P)$ is equal to that of $\mathcal{C}(P)$.
In addition, the unimodular equivalence problem of $\mathcal{O}(P)$ and $\mathcal{C}(P)$
was studied by the first author and Nan Li.  
In the present paper, 
three integral convex polytopes $\Gamma(\mathcal{O}(P), -\mathcal{O}(Q))$, 
$\Gamma(\mathcal{O}(P), -\mathcal{C}(Q))$ and $\Gamma(\mathcal{C}(P), -\mathcal{C}(Q))$,
where $P$ and $Q$ are partially ordered sets with $| P | = | Q |$, will be studied. 
First, it will be shown that the Ehrhart polynomial of 
$\Gamma(\mathcal{O}(P), -\mathcal{C}(Q))$ coincides with 
that of $\Gamma(\mathcal{C}(P), -\mathcal{C}(Q))$.  Furthermore,
when $P$ and $Q$ possess a common linear extension, it will be proved that 
these three convex polytopes have the same Ehrhart polynomial. 
Second, the problem of characterizing partially ordered sets $P$ and $Q$
for which
$\Gamma(\mathcal{O}(P), -\mathcal{O}(Q))$ or 
$\Gamma(\mathcal{O}(P), -\mathcal{C}(Q))$ or  
$\Gamma(\mathcal{C}(P), -\mathcal{C}(Q))$
is a smooth Fano polytope will be solved.
Finally, when these three polytopes are smooth Fano polytopes, 
the unimodular equivalence problem of these three polytopes will be discussed.  
\end{abstract} 

\maketitle

\section*{introduction}
A convex polytope $\Pc \subset \RR^{d}$ is called {\em integral} if all of vertices of $\Pc$ belong to $\ZZ^{d}$.  
Let $\Pc \subset \RR^{d}$ be an integral convex polytope of dimension $d$. 
%We say $\Pc \subset \RR^{d}$ is {\em normal} if, for each integer $N > 0$ 
%and for each ${\bf a} \in N \Pc \cap \ZZ^{d}$, there exist ${\bf a}_1, \ldots, {\bf a}_N \in \Pc \cap \ZZ^{d}$ 
%such that ${\bf a} = {\bf a}_1 + \cdots +{\bf a}_N$, where $N \Pc = \{ N \alpha \mid \alpha \in \Pc \}$.
Given integers $n = 1, 2, \ldots,$ we define the function $i(\Pc, n)$ as follows:
\[
i(\Pc, n) := \left| (n\Pc \cap \ZZ^{d}) \right|,  
\]
where $n\Pc = \{ n\alpha \mid \alpha \in \Pc \}$. 
We call $i(\Pc, n)$ the {\em Ehrhart polynomial} of $\Pc$.  
It is known that $i(\Pc, n)$ is a polynomial in $n$ of degree $d$ with $i(\Pc, 0) = 1$ (see \cite{Ehrhart}). 

Next, we introduce some classes of Fano polytopes.
Let $\Pc \subset \RR^d$ be an integral convex polytope 
of dimension $d$.
\begin{itemize}
	\item
	We say that $\Pc$ is a {\em Fano polytope} if
	the origin of $\RR^d$ is the unique integer point
	belonging to the interior of $\Pc$. 
%	\item
	%A Fano polytope $\Pc$ is called {\em terminal} if
	%each integer point belonging to the boundary of
%	$\Pc$ is a vertex of $\Pc$.
	%\item
	%A Fano polytope $\Pc$ is called {\em canonical} if
	%$\Pc$ is not terminal, i.e., there is an integer point
	%belonging to the boundary of $\Pc$ which is not a vertex
	%of $\Pc$.
	\item
	A Fano polytope is called {\em Gorenstein} 
	if its dual polytope is integral.
	(Recall that the dual polytope $\Pc^\vee$
	of a Fano polytope $\Pc$ is the convex polytope
	which consists of those $x \in \RR^d$
	such that $\langle x, y \rangle \leq 1$ for all
	$y \in \Pc$, where $\langle x, y \rangle$
	is the usual inner product of $\RR^d$.)
	\item
	A {\em $\QQ$-factorial Fano polytope} is a simplicial Fano polytope,
	i.e., a Fano polytope each of whose faces is a simplex. 
	\item
	A {\em smooth Fano polytope} is a Fano polytope such that
	the vertices of each facet form a $\ZZ$-basis of $\ZZ^d$. 
\end{itemize}
Thus in particular a smooth Fano polytope is 
$\QQ$-factorial and Gorenstein.

%Moreover, if $\Pc$ is normal, then its Ehrhart polynomial is the same as the Hilbert polynomial of the toric ring $K[\Pc]$ 
%defined as follows:
%\[
%K[\Pc] := K[{\bf x}^{\alpha}t \mid \alpha = (\alpha_1, \ldots, \alpha_d) {\rm \ is\ a\ vertex\ of\ } \Pc], 
%\]
%where $K$ is a field and ${\bf x}^{\alpha}$ denotes the Laurent monomial $x_1^{\alpha_1} \cdots x_d^{\alpha_d}$ 
%associated to $\alpha = (\alpha_1, \ldots, \alpha_d)$
%in $K[x_1, x_1^{-1}, \cdots, x_d, x_d^{-1}]$. 

We recall some terminologies of partially ordered sets. 
Let $P = \{p_1, \ldots, p_d\}$ be a partially ordered set. 
A {\em linear extension} of $P$ is a permutation $\sigma = i_1 i_2 \cdots i_d$ of $[d] = \{1, 2, \ldots, d\}$ 
which satisfies $i_a < i_b$ if $p_{i_a} < p_{i_b}$ in $P$. 
A subset $I$ of $P$ is called a {\em poset ideal} of $P$ if $p_{i} \in I$ and $p_{j} \in P$ together with $p_{j} \leq p_{i}$ guarantee $p_{j} \in I$.  
Note that the empty set $\emptyset$ and $P$ itself are poset ideals of $P$. 
Let $\Jc(P)$ denote the set of poset ideals of $P$.
A subset $A$ of $P$ is called an {\em antichain} of $P$ if
$p_{i}$ and $p_{j}$ belonging to $A$ with $i \neq j$ are incomparable.  
In particular, the empty set $\emptyset$ and each 1-elemant subsets $\{p_j\}$ are antichains of $P$.
Let $\Ac(P)$ denote the set of antichains of $P$.
For each subset $I \subset P$, 
we define the $(0, 1)$-vectors $\rho(I) = \sum_{p_{i}\in I} \eb_{i}$, 
where $\eb_{1}, \ldots, \eb_{d}$ are the canonical unit coordinate vectors of $\RR^{d}$.  
In particular $\rho(\emptyset)$ is the origin ${\bf 0}$ of $\RR^{d}$. 

%Recall the definitions of the order polytope and the chain polytope. 
In \cite{Stanley}, Stanley introduced the order polytope $\Oc(P)$ and the chain polytope $\Cc(P)$ 
arising from a partially ordered set $P$. 
%The {\em order polytope} $\Oc(P) \subset \RR^{d}$ is the convex polytope 
%which consists of those $(x_1, \ldots, x_d) \in \RR^{d}$ 
%such that $0 \le x_i \le 1$ for all $1 \le i \le d$ together with 
%\[
%x_i \ge x_j
%\]
%if $p_i \le p_j$ in $P$. 
%In addition, the {\em chain polytope} $\Cc(P) \subset \RR^{d}$ is the convex polytope which consists of 
%those $(x_1, \ldots, x_d) \in \RR^{d}$ such that $x_i \ge 0$ for all $1 \le i \le d$ together with
%\[
%x_{i_1} + x_{i_2} + \cdots + x_{ik} \le 1
%\]
%for all maximal chain $p_{i_1} < p_{i_2} < \cdots < p_{i_k} \le 1$ of $P$. 
It is known that both $\Oc(P)$ and $\Cc(P)$ are $d$-dimensional convex polytopes, and 
\[
\{ {\rm the\ sets\ of\ vertices\ of\ } \Oc(P) \} = \{ \rho(I) \mid I \in \Jc(P) \}, 
\]
\[
\{ {\rm the\ sets\ of\ vertices\ of\ } \Cc(P) \} = \{ \rho(A) \mid A \in \Ac(P) \}
\]
follows (\cite[Corollary 1.3, Theorem 2.2]{Stanley}). 
Moreover, $\Oc(P)$ and $\Cc(P)$ have the same Ehrhart polynomial (\cite[Theorem 4.1]{Stanley}). 
In particular, the volume of $\Oc(P)$ and $\Cc(P)$ are equal to $e(P) / d!$, 
where $e(P)$ is the number of linear extensions of $P$ (\cite[Corollary 4.2]{Stanley}). 

In present papers, as analogies of the order polytope and the chain polytope, 
the integral convex polytopes associated with two partially ordered sets are studied. 
These polytopes are given by combining the order polytopes and the chain polytopes. 

Let $P=\{p_1,\ldots,p_d\}$ and $Q=\{q_1,\ldots,q_d\}$ be finite partially ordered sets with $|P|=|Q|=d$.
We define integer matrices $\Psi(\Oc(P), - \Oc(Q))$, $\Psi(\Oc(P), - \Cc(Q))$ and 
$\Psi(\Cc(P), - \Cc(Q))$ as follows:
\begin{align*}
\Psi(\Oc(P), - \Oc(Q)) &= 
\{ \, \rho(I) \, \mid \, \emptyset \neq I \in \Jc(P) \, \}  \cup
\{ \, - \rho(J) \,\mid\, \emptyset \neq J \in \Jc(Q) \, \} 
\cup \{ {\bf 0} \} , \\
\Psi(\Oc(P), - \Cc(Q)) &= 
\{ \, \rho(I) \, \mid \, \emptyset \neq I \in \Jc(P) \, \}  \cup
\{ \, - \rho(J) \,\mid\, \emptyset \neq J \in \Ac(Q) \, \} 
\cup \{ {\bf 0} \} , \\
\Psi(\Cc(P), - \Cc(Q)) &= 
\{ \, \rho(I) \, \mid \, \emptyset \neq I \in \Ac(P) \, \}  \cup
\{ \, - \rho(J) \,\mid\, \emptyset \neq J \in \Ac(Q) \, \} 
\cup \{ {\bf 0} \} 
\end{align*}
and we write $\Gamma(\Oc(P), - \Oc(Q)) \subset \RR^{d}$ for the convex polytope 
which is the convex hull of $\Psi(\Oc(P), - \Cc(Q))$. 
Similarly, we define $\Gamma(\Oc(P), - \Cc(Q))$ and $\Gamma(\Cc(P), - \Cc(Q))$ 
as the convex hull of $\Psi(\Oc(P), - \Cc(Q))$ and $\Psi(\Cc(P), - \Cc(Q))$, respectively. 
These polytopes are analogies of the order polytope and the chain polytope, 
and are generalizations of the convex polytope arising from the centrally symmetric configuration 
(see \cite{CSC}). 

We note that these are $d$-dimensional polytopes.  
Moreover, since $\rho(P) = \eb_{1} + \cdots + \eb_{d} \in \Oc(P)$ and 
$\rho(\{q_j\})=\eb_{j} \in \Cc(Q)$ for $1 \leq j \leq d$, 
we have that the origin ${\bf 0}$ of $\RR^{d}$
is belonging to the interior of $\Gamma(\Oc(P), - \Cc(Q))$ and that of $\Gamma(\Cc(P), - \Cc(Q))$. 
However, it is not necessarily that $\Gamma(\Oc(P), - \Oc(Q))$ has the same property. 
It is known that the origin ${\bf 0}$ of $\RR^{d}$
is belonging to the interior of $\Gamma(\Oc(P), - \Oc(Q))$ if and only if $P$ and $Q$ possess a 
common linear extension (\cite[Lemma 1.1]{twin}). 
In addition, it is also known that $\Gamma(\Oc(P), - \Oc(P))$, $\Gamma(\Oc(P), - \Cc(Q))$ and
$\Gamma(\Cc(P), - \Cc(Q))$ are always Gorenstein Fano
(\cite[Corollary 2.3]{HMOS}, \cite[Corollary 1.2]{orderchain}, \cite[Theorem 2.8]{harmony}) and 
$\Gamma(\Oc(P), - \Oc(Q))$ is Gorenstein Fano if and only if  
$P$ and $Q$ possess a common linear extension (\cite[Corollary 2.2]{twin}). 
Hence to determine when these polytopes are smooth Fano is an important problem. 
Similarly, the question whether these polytopes are unimodularly equivalent when these polytopes are smooth
is also interesting. 
The problem when $\Oc(P)$ and $\Cc(P)$ are unimodularly equivalent was solved in \cite{unimodular}. 

This paper is organized as follows. 
In Section $1$, we study the Ehrhart polynomials of these polytopes 
%We show that 
%these polytopes have the same Ehrhart polynomial, in particular, have the same volume if $P$ and $Q$ possess a common linear extension 
(Theorem \ref{Ehrhart}).
%\begin{Theorem}
%Work with the same notation as above. 
%Then we have
%\[
%i(\Gamma(\Oc(P), - \Cc(Q)), n) = i(\Gamma(\Cc(P), - \Cc(Q)), n).
%\] 
%In particular, the volume of $\Gamma(\Oc(P), - \Cc(Q))$ is the same as that of $\Gamma(\Cc(P), - \Cc(Q))$. 
%Moreover, if $P$ and $Q$ possess a common linear extension, then we have
%\[
%i(\Gamma(\Oc(P), - \Oc(Q)), n) = i(\Gamma(\Oc(P), - \Cc(Q)), n) = i(\Gamma(\Cc(P), - \Cc(Q)), n).
%\]
%In this case, these polytopes have the same volume. 
%\end{Theorem}
This is an analogy of Stanley's results mentioned before. 
In Section $2$, we study the characterization problem of partially ordered sets yield smooth Fano polytopes (Theorems \ref{ccs}, \ref{ocs} and \ref{oos}).
Finally, in Section $3$, we study the unimodular equivalence of smooth Fano polytopes.
In fact, we show $\Gamma(\Oc(P), - \Oc(Q))$ and $\Gamma(\Cc(P), - \Cc(Q))$ are unimodularly  equivalent, however,   these polytopes are not unimodularly equivalent to $\Gamma(\Oc(P), - \Cc(Q))$, when all polytopes are smooth (Theorem \ref{equi}).

For fundamental materials on Gr\"{o}bner bases and toric ideals, see \cite{dojoEN}. 

 \section{Squarefree Quadratic Gr\"{o}bner basis and Ehrhart polynomial}
 
In this section, we show the following:

\begin{Theorem}
	\label{Ehrhart}
Work with the same notation as in Introduction. 
Then we have
\[
i(\Gamma(\Oc(P), - \Cc(Q)), n) = i(\Gamma(\Cc(P), - \Cc(Q)), n).
\] 
In particular, the volume of $\Gamma(\Oc(P), - \Cc(Q))$ is the same as that of $\Gamma(\Cc(P), - \Cc(Q))$. 
Moreover, if $P$ and $Q$ possess a common linear extension, then we have
\[
i(\Gamma(\Oc(P), - \Oc(Q)), n) = i(\Gamma(\Oc(P), - \Cc(Q)), n) = i(\Gamma(\Cc(P), - \Cc(Q)), n).
\]
In this case, these polytopes have the same volume. 
\end{Theorem}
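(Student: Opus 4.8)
The plan is to pass to the toric rings of the three polytopes and to compare their Hilbert functions by means of squarefree quadratic Gr\"obner bases, as the section title suggests. For each of our Fano polytopes $\Pc$ I would collect its lattice points (which are the origin together with the listed vertices), lift each generator $\alpha$ to $(\alpha, 1) \in \ZZ^{d+1}$, and let $K[\Pc]$ denote the associated toric ring, graded so that the $n$-th graded piece is spanned by the degree-$n$ monomials. The first step is to produce, for each of the three defining toric ideals, a term order whose initial ideal is squarefree and quadratic. A squarefree initial ideal forces a regular unimodular triangulation on the generating set, which both certifies that the generators already exhaust the lattice points of $\Pc$ and yields normality; consequently $\Hilb(K[\Pc], n) = i(\Pc, n)$. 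With normality in hand, an equality of Ehrhart polynomials reduces to an equality of Hilbert functions of the toric rings, and the latter is unchanged upon replacing an ideal by one with the same initial ideal.

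The combinatorial engine is the classical bijection between poset ideals and antichains: sending a poset ideal $I \in \Jc(P)$ to its set of maximal elements $\max(I) \in \Ac(P)$ is a bijection $\Jc(P) \to \Ac(P)$ with $\emptyset \mapsto \emptyset$, and it is precisely the vertex-level shadow of Stanley's transfer map between $\Oc(P)$ and $\Cc(P)$. I would use this bijection, together with the identity on the $Q$-side and a single variable for the origin ${\bf 0}$, to identify the polynomial rings carrying the toric ideals of $\Gamma(\Oc(P), -\Cc(Q))$ and $\Gamma(\Cc(P), -\Cc(Q))$: in both cases the $Q$-part is indexed by $\Ac(Q)$, while the $P$-part is indexed by $\Jc(P)$ respectively by $\Ac(P)$, and the bijection matches these. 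The heart of the first equality is then to choose the two term orders so that, after this identification, the two squarefree quadratic initial ideals coincide as monomial ideals. On the order side the relevant quadratic leading terms are the Hibi relations $x_I x_{I'}$ for incomparable $I, I' \in \Jc(P)$; I would check that $I \mapsto \max(I)$ carries these onto the leading terms arising on the chain side, and that the cross relations mixing a $P$-generator with a $Q$-generator, together with those involving the origin, agree verbatim in the two settings because the $Q$-part and the origin are untouched by the bijection. Equality of the initial ideals gives equality of Hilbert functions, hence $i(\Gamma(\Oc(P), -\Cc(Q)), n) = i(\Gamma(\Cc(P), -\Cc(Q)), n)$, and comparing leading coefficients yields the equality of volumes.

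For the three-fold equality under the hypothesis that $P$ and $Q$ share a common linear extension, I would apply the bijection simultaneously on the $P$-side and the $Q$-side, thereby bringing $\Gamma(\Oc(P), -\Oc(Q))$ into the comparison: its $P$-part is indexed by $\Jc(P)$ and its $Q$-part by $\Jc(Q)$, and the two maps $\Jc(P) \to \Ac(P)$, $\Jc(Q) \to \Ac(Q)$ identify its generating set with that of $\Gamma(\Cc(P), -\Cc(Q))$. The common-linear-extension hypothesis is exactly what guarantees, by \cite[Lemma 1.1]{twin}, that the origin lies in the interior of $\Gamma(\Oc(P), -\Oc(Q))$, so that it is a genuine Fano polytope and the toric and normality machinery applies to it on equal footing with the other two (for $\Gamma(\Oc(P), -\Cc(Q))$ and $\Gamma(\Cc(P), -\Cc(Q))$ the origin is always interior, as recorded in the introduction). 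I would then rerun the initial-ideal comparison, now transferring the $Q$-side relations through $\Jc(Q) \to \Ac(Q)$ as well, to conclude that all three toric ideals share one common squarefree quadratic initial ideal, whence all three Ehrhart polynomials coincide.

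The step I expect to be the main obstacle is the explicit construction of the term orders and the verification that the resulting initial ideals are simultaneously squarefree quadratic and literally equal after the bijection; in particular, controlling the cross relations between the positive $P$-generators and the negative $Q$-generators and the relations involving the origin, and confirming that no non-squarefree or higher-degree leading term survives in the reduced Gr\"obner basis. Establishing the Hibi-type quadratic relations on the order side together with their exact antichain counterparts on the chain side, and checking that the transfer bijection carries one leading-term set onto the other, is where the combinatorial bookkeeping concentrates.
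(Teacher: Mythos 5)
Your proposal follows essentially the same route as the paper: establish squarefree quadratic Gr\"obner bases for the three toric ideals (the new work being the one for $\Gamma(\Cc(P),-\Cc(Q))$, the other two being cited from \cite{twin} and \cite{orderchain}), deduce normality so that Ehrhart polynomials become Hilbert polynomials, and then use the bijection $I \mapsto \max(I)$ to identify the quotients by the initial ideals — exactly the paper's Propositions on the Gr\"obner basis of $I_{\Gamma(\Cc(P),-\Cc(Q))}$ and on the isomorphisms $R_{\Oc\Cc} \cong R_{\Cc\Cc}$ (and $R_{\Oc\Oc} \cong R_{\Oc\Cc}$ under a common linear extension). The plan is correct and matches the paper's proof in structure and in every key step.
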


In order to prove this theorem, we use the following facts. 
We say that an integral convex polytope $\Pc \subset \RR^{d}$ is {\em normal} if, 
for each integer $N > 0$ 
and for each ${\bf a} \in N \Pc \cap \ZZ^{d}$, there exist ${\bf a}_1, \ldots, {\bf a}_N \in \Pc \cap \ZZ^{d}$ 
such that ${\bf a} = {\bf a}_1 + \cdots +{\bf a}_N$.

\begin{itemize}
	\item Let $\Pc \subset \RR^d$ be an integral convex polytope of $\dim \Pc = d$ and 
	\[
	K[\Pc] := K[x_1^{\alpha_1} \cdots x_d^{\alpha_d}t \mid (\alpha_1, \ldots, \alpha_d) \in \Pc \ ] \subset K[x_1, \ldots, x_d, t]
	\]
	be the toric ring of $\Pc$ over a field $K$. 
	 Assume that there exists a monomial order $<$ on $K[x_1, \ldots, x_d, t]$ such that the initial ideal 
	 $\mathrm{in}_{<}(I_{\Pc})$ of the toric ideal of $K[\Pc]$ with respect to the order $<$ is squarefree. 
	 Then $\Pc$ is a normal polytope.  (see \cite{dojoEN}) \\
	 \item Let $\Pc \subset \RR^d$ be an integral convex polytope. 
	 If $\Pc$ is normal, then the Ehrhart polynomial of $\Pc$ is equal to the Hilbert polynomial of 
	 the toric ring $K[\Pc]$.  \\
	 \item Let $S$ be a polynomial ring and $I \subset S$ be a graded ideal of $S$. 
	 Let $<$ be a monomial order on $S$. 
	 Then $S/I$ and $S/\mathrm{in}_{<}(I)$ have the same Hilbert function. 
	 (see \cite[Corollary 6.1.5]{Monomial})
\end{itemize}

\vspace{2mm}

At first, we define the toric rings $K[\Gamma(\Oc(P), - \Oc(Q))]$ (resp. $K[\Gamma(\Oc(P), - \Cc(Q))]$ 
and $K[\Gamma(\Cc(P), - \Cc(Q))]$) of the polytope 
$\Gamma(\Oc(P), - \Oc(Q))$ (resp. $\Gamma(\Oc(P), - \Cc(Q))$ and $\Gamma(\Oc(P), - \Oc(Q))$), 
and prove the normality of these polytopes by using the theory of toric ideals. 

Let, as before, $P = \{ p_{1}, \ldots, p_{d} \}$ and $Q = \{ q_{1}, \ldots, q_{d} \}$ 
be finite partially ordered sets with $|P| = |Q| = d$. 
For a poset ideal of $P$ or $Q$, we write $\max(I)$ for the set of maximal elements of $I$.
In particular, $\max(I)$ is an antichain.  
Note that for each antichain $A$, there exists a poset ideal $I$ such that 
$A=\max(I)$. 

Let $K[{\bf t}, {\bf t}^{-1}, s] 
= K[t_{1}, \ldots, t_{d}, t_{1}^{-1}, \ldots, t_{d}^{-1}, s]$
denote the Laurent polynomial ring in $2d + 1$ variables over a field $K$. 
If $\alpha = (\alpha_{1}, \ldots, \alpha_{d}) \in \ZZ^{d}$, then
${\bf t}^{\alpha}s$ is the Laurent monomial
$t_{1}^{\alpha_{1}} \cdots t_{d}^{\alpha_{d}}s \in K[{\bf t}, {\bf t}^{-1}, s]$. 
In particular ${\bf t}^{\bf 0}s = s$.
Then we define the toric rings $K[\Gamma(\Oc(P), - \Oc(Q))], K[\Gamma(\Oc(P), - \Cc(Q))]$ 
and $K[\Gamma(\Cc(P), - \Cc(Q))]$ as follows: 
\[
K[\Gamma(\Oc(P), - \Oc(Q))] = K[{\bf t}^{\alpha}s \mid \alpha \in \Gamma(\Oc(P), - \Oc(Q))], 
\]
\[
K[\Gamma(\Oc(P), - \Cc(Q))] = K[{\bf t}^{\alpha}s \mid \alpha \in \Gamma(\Oc(P), - \Cc(Q))], 
\]
\[
K[\Gamma(\Cc(P), - \Cc(Q))] = K[{\bf t}^{\alpha}s \mid \alpha \in \Gamma(\Cc(P), - \Cc(Q))]. 
\]

Let 
\begin{eqnarray*}
K[{\Oc \Oc}] &=& K[\{x_{I}\}_{\emptyset \neq I \in \Jc(P)} \cup 
\{y_{J}\}_{\emptyset \neq J \in \Jc(Q)} \cup \{ z \}], \\
K[{\Oc \Cc}] &=& K[\{x_{I}\}_{\emptyset \neq I \in \Jc(P)} \cup 
\{y_{\max(J)}\}_{\emptyset \neq J \in \Jc(Q)} \cup \{ z \}], \\
K[{\Cc \Cc}] &=& K[\{x_{\max(I)}\}_{\emptyset \neq I \in \Jc(P)} \cup 
\{y_{\max(J)}\}_{\emptyset \neq J \in \Jc(Q)} \cup \{ z \}]
\end{eqnarray*}
denote the polynomial rings over $K$, 
and define the surjective ring homomorphisms 
$\pi_{\Oc \Oc}$, $\pi_{\Oc \Cc}$ and $\pi_{\Cc \Cc}$ by the following: 

\vspace{2mm}

\begin{itemize}
	\item $\pi_{\Oc \Oc} : K[{\Oc \Oc}] \to K[\Gamma(\Oc(P), - \Oc(Q))]$ by setting \\
	$\pi_{\Oc \Oc}(x_{I}) = {\bf t}^{\rho(I)}s$, $\pi_{\Oc \Oc}(y_{J}) = {\bf t}^{- \rho(J)}s$ and 
	$\pi_{\Oc \Oc}(z) = s$, \\
	\item $\pi_{\Oc \Cc} : K[{\Oc \Cc}] \to K[\Gamma(\Oc(P), - \Cc(Q))]$ by setting \\
	$\pi_{\Oc \Cc}(x_{I}) = {\bf t}^{\rho(I)}s$, $\pi_{\Oc \Cc}(y_{\max(J)}) = {\bf t}^{- \rho(\max(J))}s$ and 
	$\pi_{\Oc \Cc}(z) = s$, \\
	\item $\pi_{\Cc \Cc} : K[{\Cc \Cc}] \to K[\Gamma(\Cc(P), - \Cc(Q))]$ by setting \\
	$\pi_{\Cc \Cc}(x_{\max(I)}) = {\bf t}^{\rho(\max(I))}s$, $\pi_{\Cc \Cc}(y_{\max(J)}) = {\bf t}^{- \rho(\max(J))}s$ and $\pi_{\Cc \Cc}(z) = s$ \\
\end{itemize}
where $\emptyset \neq I \in \Jc(P)$ and $\emptyset \neq J \in \Jc(Q)$. 
Then the {\em toric ideal} $I_{\Gamma(\Oc(P), - \Oc(Q))}$ of $\Gamma(\Oc(P), - \Oc(Q))$ is 
the kernel of $\pi_{\Oc \Oc}$. 
Similarly, the toric ideal $I_{\Gamma(\Oc(P), - \Cc(Q))}$ (resp. $I_{\Gamma(\Cc(P), - \Cc(Q))}$) is 
the kernel of $\pi_{\Oc \Cc}$ (resp. $\pi_{\Cc \Cc}$). 

%Firstly, we compute a Gr\"{o}bner bases of $I_{\Psi(\Cc(P), - \Cc(Q))}$. 

\vspace{2mm}

Next, we introduce monomial orders $<_{\Oc \Oc}$, $<_{\Oc \Cc}$ and $<_{\Cc \Cc}$
and $\Gc_{\Oc \Oc}$, $\Gc_{\Oc \Cc}$ and $\Gc_{\Cc \Cc}$ which are the set of binomials. 

Let $<_{\Oc \Oc}$ denote a reverse lexicographic order on $K[{\Oc \Oc}]$
satisfying
\begin{itemize}
	\item
	$z <_{\Oc \Oc}  y_{J} <_{\Oc \Oc} x_{I}$;
	\item
	$x_{I'} <_{\Oc \Oc} x_{I}$ if $I' \subset I$;
	\item
	$y_{J'} <_{\Oc \Oc} y_{J}$ if $J' \subset J$,
\end{itemize}
and $\Gc_{\Oc \Oc}$ the set of the following binomials:
\begin{enumerate}
	\item[(i)]
	$x_{I}x_{I'} - x_{I\cup I'}x_{I \cap I'}$;
	\item[(ii)]
	$y_{J}y_{J'} - y_{J\cup J'}y_{J \cap J}$;
	\item[(iii)]
	$x_{I}y_{J} - x_{I \setminus \{p_{i}\}}y_{J \setminus \{q_{i}\}}$,
\end{enumerate}
and let $<_{\Oc \Cc}$ denote a reverse lexicographic order on $K[{\Oc \Cc}]$
satisfying
\begin{itemize}
	\item
	$z <_{\Oc \Cc}  y_{\max(J)} <_{\Oc \Cc} x_{I}$;
	\item
	$x_{I'} <_{\Oc \Cc} x_{I}$ if $I' \subset I$;
	\item
	$y_{\max(J')} <_{\Oc \Cc} y_{\max(J)}$ if $J' \subset J$,
\end{itemize}
and $\Gc_{\Oc \Cc}$ the set of the following binomials:
\begin{enumerate}
	\item[(iv)]
	$x_{I}x_{I'} - x_{I\cup I'}x_{I \cap I'}$;
	\item[(v)]
	$y_{\max(J)}y_{\max(J')} - y_{\max(J\cup J')}y_{\max(J * J')}$;
	\item[(vi)]
	$x_{I}y_{\max(J)} - x_{I \setminus \{p_{i}\}}y_{\max(J) \setminus \{q_{i}\}}$,
\end{enumerate}
and let $<_{\Cc \Cc}$ denote a reverse lexicographic order on $K[{\Cc \Cc}]$
satisfying
\begin{itemize}
	\item
	$z <_{\Cc \Cc}  y_{\max(J)} <_{\Cc \Cc} x_{\max(I)}$;
	\item
	$x_{\max(I')} <_{\Cc \Cc} x_{\max(I)}$ if $I' \subset I$;
	\item
	$y_{\max(J')} <_{\Cc \Cc} y_{\max(J)}$ if $J' \subset J$,
\end{itemize}
and $\Gc_{\Cc \Cc}$ the set of the following binomials:
\begin{enumerate}
	\item[(vii)]
	$x_{\max(I)}x_{\max(I')} - y_{\max( I \cup I')}y_{\max(I * I')}$;
	\item[(viii)]
	$y_{\max(J)}y_{\max(J')} - y_{\max(J\cup J')}y_{\max(J * J')}$;
	\item[(ix)]
	$x_{\max(I)}y_{\max(J)} - x_{\max(I) \setminus \{p_{i}\}}y_{\max(J) \setminus \{q_{i}\}}$,
\end{enumerate}
where 
\begin{itemize}
	\item
	$x_{\emptyset} = y_{\emptyset} = z$;
	\item
	$I$ and $I'$ are poset ideals of $P$ which are incomparable in $\Jc(P)$;
	\item  
	$J$ and $J'$ are poset ideals of $Q$ which are incomparable in $\Jc(Q)$;
	\item
	$I * I'$ is the poset ideal of $P$ generated by $\max(I \cap I')\cap (\max(I)\cup \max(I'))$;
	\item
	$J * J'$ is the poset ideal of $Q$ generated by $\max(J \cap J')\cap (\max(J)\cup \max(J'))$;
	\item $p_{i}$ is a maximal element of $I$ and $q_{i}$ is a maximal element of $J$. 
\end{itemize}

It is known that $\Gc_{\Oc \Oc}$ is a Gr\"{o}bner basis of $I_{\Gamma(\Oc(P), - \Oc(Q))}$ 
with respect to $<_{\Oc \Oc}$(\cite[Theorem 2.1]{twin}) and $\Gc_{\Oc \Cc}$ is a Gr\"{o}bner basis of $I_{\Gamma(\Oc(P), - \Cc(Q))}$ 
with respect to $<_{\Oc \Cc}$(\cite[Theorem 1.1]{orderchain}). 
As corollaries, we have the normality of $\Gamma(\Oc(P), - \Oc(Q))$ and $\Gamma(\Oc(P), - \Cc(Q))$ 
(\cite[Corollary 2.2]{twin}, \cite[Corollary 1.2]{orderchain}). 
However, the polytope $\Gamma(\Cc(P), - \Cc(Q))$ was studied in \cite{harmony}
in more general situation, the normality of this polytope is still open. 
To prove that $\Gamma(\Cc(P), - \Cc(Q))$ is normal,  
we compute a Gr\"{o}bner basis of $I_{\Gamma(\Cc(P), - \Cc(Q))}$ with respect to $<_{\Cc \Cc}$. 
 
\vspace{2mm}

	\begin{Proposition}
		Work with the same situation as above.
		Then $\Gc_{\Cc \Cc}$ is a Gr\"obner basis of
		$I_{\Gamma(\Cc(P), - \Cc(Q))}$ with respect to $<_{\Cc \Cc}$.
	\end{Proposition}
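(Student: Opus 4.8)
The plan is to apply the standard criterion for toric ideals: a finite set of binomials $\Gc \subseteq I$ is a Gr\"obner basis of a toric ideal $I = \ker \pi$ with respect to $<$ precisely when no two distinct monomials having the same $\pi$-image are both standard (divisible by none of the initial monomials $\mathrm{in}_<(g)$, $g \in \Gc$); indeed, standard monomials always span $S/I$, and distinct $\pi$-images render them linearly independent over $K$, so under this hypothesis they form a $K$-basis and $\langle \mathrm{in}_{<_{\Cc\Cc}}(g)\rangle = \mathrm{in}_{<_{\Cc\Cc}}(I_{\Gamma(\Cc(P),-\Cc(Q))})$.

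First I would check that $\Gc_{\Cc \Cc} \subseteq I_{\Gamma(\Cc(P),-\Cc(Q))}$ by applying $\pi_{\Cc\Cc}$ to each binomial. For (ix) this is immediate: deleting the common index $i$ from both $\max(I)$ and $\max(J)$ alters the exponent vector $\rho(\max I) - \rho(\max J)$ by $\eb_i - \eb_i = \mathbf{0}$. For (vii) and (viii) it amounts to the multiset identity $\rho(\max I) + \rho(\max I') = \rho(\max(I\cup I')) + \rho(\max(I*I'))$, which follows from the definition of $I * I'$. Next I would pin down the initial monomials under the reverse lexicographic order $<_{\Cc\Cc}$: since $I*I' \subseteq I\cap I' \subsetneq I,I' \subsetneq I\cup I'$, the variable $x_{\max(I*I')}$ is the $<_{\Cc\Cc}$-smallest of the four occurring in (vii), so the monomial not containing it, namely $x_{\max(I)}x_{\max(I')}$, is the leading term; likewise $y_{\max(J)}y_{\max(J')}$ for (viii), while for (ix) the leading term is visibly $x_{\max(I)}y_{\max(J)}$. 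In particular all these initial monomials are squarefree, which is what yields normality afterwards.

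The main step is to verify the injectivity of $\pi_{\Cc\Cc}$ on standard monomials. A standard monomial $z^c\prod_A x_A^{a_A}\prod_B y_B^{b_B}$ (the products over antichains of $P$, resp. $Q$) is characterised by: the antichains $A$ with $a_A>0$ generate a chain of poset ideals of $P$ (avoiding the leading terms of (vii)); the $B$ with $b_B>0$ generate a chain in $Q$ (avoiding (viii)); and there is no index $i$ with $p_i\in A$, $q_i\in B$ for $A,B$ in the respective supports (avoiding (ix)). This last condition forces a clean separation of the exponent vector $\alpha = \sum_A a_A\rho(A) - \sum_B b_B\rho(B)$: for each $i$ at most one of the two sums is nonzero, so the positive part of $\alpha$ equals $\sum_A a_A\rho(A)$ and its negative part equals $\sum_B b_B\rho(B)$. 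Since the image also records the total degree (the exponent of $s$), the problem reduces to the following uniqueness statement, applied separately to $P$ and to $Q$: a vector $\beta\in\ZZ_{\geq0}^d$ admits at most one expression $\beta=\sum_k a_k\rho(A_k)$ with $a_k\geq1$ and with the $A_k$ distinct antichains whose generated ideals form a strict chain.

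I expect this last uniqueness to be the crux. The approach I would take is Stanley's transfer map: given such a decomposition, set $f(p_i)=\sum_{k:\,p_i\in \langle A_k\rangle}a_k$, an order-reversing function on $P$, and verify $\beta_i = f(p_i) - \max\{f(p_j) : p_j\gtrdot p_i\}$ (with empty maximum $=0$). Telescoping along maximal chains then shows $f(p_i)=\max\{\sum_{p\in C}\beta_p\}$ over saturated chains $C$ ascending from $p_i$, so $f$ is determined by $\beta$ alone; recovering the chain of ideals as the superlevel sets $\{p_i : f(p_i)\geq t\}$ then recovers the $A_k$ and $a_k$, giving uniqueness and hence injectivity on standard monomials. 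The criterion of the first paragraph completes the proof. The points demanding the most care are the correct handling of the operation $*$ together with the degenerate cases in which some of the antichains in (vii)--(ix) coincide, and the verification of the transfer identity in the passage between antichains (where $p_i$ is maximal in $\langle A_k\rangle$) and the ideals themselves.
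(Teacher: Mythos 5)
Your proof is correct, and while it starts from the same Ohsugi--Hibi criterion as the paper (distinct standard monomials must have distinct $\pi_{\Cc\Cc}$-images) and extracts the same description of standard monomials from the leading terms of (vii)--(ix) (the $x$-ideals form a chain, the $y$-ideals form a chain, and no index $i$ occurs simultaneously in an antichain of the $x$-support and one of the $y$-support), the injectivity step is genuinely different. The paper argues by contradiction: it disposes of the pure cases ($x$-variables only, or $y$-variables only) by citing Hibi (1987) and Hibi--Li on distributive lattices and chain polytopes, and in the mixed case chooses a maximal element $p_{i^*}$ of $I_a$ outside $I'_{a'}$, compares $t_{i^*}$-degrees to force some $J_c$ with $q_{i^*}\in\max(J_c)$, and contradicts standardness via (ix). You instead note that the (ix)-condition splits the exponent vector cleanly into its positive part $\sum_A a_A\rho(A)$ and negative part $\sum_B b_B\rho(B)$, and then prove directly, via Stanley's transfer map, that a nonnegative integer vector admits at most one decomposition as a positive combination of distinct antichains whose generated ideals form a chain. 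Your route is self-contained (it subsumes the pure cases rather than outsourcing them) and more constructive, since it reconstructs the standard monomial from its image; the paper's is shorter because it leans on prior results. The only points where your sketch leaves real work are exactly the ones you flag: the multiset identity $\rho(\max I)+\rho(\max I')=\rho(\max(I\cup I'))+\rho(\max(I*I'))$ behind (vii)/(viii), and the transfer identity $\beta_i=f(p_i)-\max\{f(p_j):p_j\gtrdot p_i\}$, whose verification does require the chain condition on the ideals in the support (without it only the inequality $\geq$ holds).
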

	\begin{proof}
		First, it is clear that $\Gc_{\Cc \Cc} \subset I_{\Gamma(\Cc(P), - \Cc(Q))}$.
		For a binomial $f = u - v$, we call $u$ the {\em first}
		monomial of $f$ and we call $v$ the {\em second} monomial of $f$. 
		By the definition of $<_{\Cc \Cc}$, the initial monomial of each of the binomials (vii) -- (ix) 
		with respect to $<_{\Cc \Cc}$ is its first monomial. 
		Let ${\rm in}_{<_{\Cc \Cc}}(\Gc_{\Cc \Cc})$ denote the set of initial monomials of binomials 
		belonging to $\Gc_{\Cc \Cc}$.  
		From \cite[(0.1)]{OHrootsystem}, it follows that,
		% H.~Ohsugi and T.~Hibi, 
		% Quadratic initial ideals of root systems,
		% Proc. Amer. Math. Soc. {\bf 130} (2002), 1913--1922.
		in order to show that $\Gc_{\Cc \Cc}$ is a Gr\"obner basis of
		$I_{\Gamma(\Cc(P), - \Cc(Q))}$ with respect to $<_{\Cc \Cc}$, we need to prove 
		the following:
		
		($\clubsuit$) If $u$ and $v$ are monomials belonging to 
		$K[{\Cc \Cc}]$ with $u \neq v$ such that 
		$u \not\in \langle {\rm in}_{<_{\Cc \Cc}}(\Gc_{\Cc \Cc}) \rangle$ 
		and $v \not\in \langle {\rm in}_{<_{\Cc \Cc}}(\Gc_{\Cc \Cc}) \rangle$,
		then $\pi_{\Cc \Cc}(u) \neq \pi_{\Cc \Cc}(v)$.
		
		\vspace{2mm}
		
		Let $u, v \in K[{\Cc \Cc}]$ be monomials
		with $u \neq v$.  Write
		\[
		u = z^{\alpha} x_{\max(I_{1})}^{\xi_{1}} \cdots x_{\max(I_{a})}^{\xi_{a}}
		y_{\max(J_{1})}^{\nu_{1}} \cdots y_{\max(J_{b})}^{\nu_{b}},
		\]
		\[
		v = z^{\alpha'} x_{\max(I'_{1})}^{\xi'_{1}} \cdots x_{\max(I'_{a'})}^{\xi'_{a'}}
		y_{\max(J'_{1})}^{\nu'_{1}} \cdots y_{\max(J'_{b'})}^{\nu'_{b'}},
		\]
		where
		\begin{itemize}
			\item
			$\alpha \geq 0$, $\alpha' \geq 0$;
			\item
			$I_{1}, \ldots, I_{a}, I'_{1}, \ldots, I'_{a'} 
			\in \Jc(P) \setminus \{ \emptyset \}$;
			\item
			$J_{1}, \ldots, J_{b}, J'_{1}, \ldots, J'_{b'} 
			\in \Jc(Q) \setminus \{ \emptyset \}$;
			\item
			$\xi_{1}, \ldots, \xi_{a}, 
			\nu_{1}, \ldots, \nu_{b},
			\xi'_{1}, \ldots, \xi'_{a'}, 
			\nu'_{1}, \ldots, \nu'_{b'} > 0$,
		\end{itemize}
		and where $u$ and $v$ are relatively prime with 
		$u \not\in \langle {\rm in}_{<_{\Cc \Cc}}(\Gc_{\Cc \Cc}) \rangle$ 
		and $v \not\in \langle {\rm in}_{<_{\Cc \Cc}}(\Gc_{\Cc \Cc}) \rangle$.
		Note that either $\alpha = 0$ or $\alpha' = 0$ since $u$ and $v$ are relatively prime.
		Hence we may assume that $\alpha' = 0$.  Thus
		\[
		u = z^{\alpha} x_{\max(I_{1})}^{\xi_{1}} \cdots x_{\max(I_{a})}^{\xi_{a}}
		y_{\max(J_{1})}^{\nu_{1}} \cdots y_{\max(J_{b})}^{\nu_{b}},
		\]
		\[
		v = x_{\max(I'_{1})}^{\xi'_{1}} \cdots x_{\max(I'_{a'})}^{\xi'_{a'}}
		y_{\max(J'_{1})}^{\nu'_{1}} \cdots y_{\max(J'_{b'})}^{\nu'_{b'}}. 
		\]
		Since the initial monomial of each of the binomials (vii) -- (ix) with respect to $<_{\Cc \Cc}$ 
		does not belong to $\langle {\rm in}_{<_{\Cc \Cc}}(\Gc_{\Cc \Cc}) \rangle$, 
		we have that
		\begin{itemize}
			\item
			$I_{1} \subsetneq  I_{2} \subsetneq \cdots \subsetneq I_{a}$ and $J_{1} \subsetneq J_{2} \subsetneq \cdots \subsetneq J_{b}$;
			\item
			$I'_{1} \subsetneq I'_{2} \subsetneq \cdots \subsetneq I'_{a'}$ and	$J'_{1} \subsetneq J'_{2} \subsetneq \cdots \subsetneq J'_{b'}$.
		\end{itemize}
		Furthermore, by virtue of \cite{Hibi1987} and \cite{chain}, it suffices to discuss 
		$u$ and $v$ with $(a, a') \neq (0, 0)$ and $(b, b') \neq (0,0)$.
		
		Since $I_a \neq I'_{a'}$, we may assume that $I_a \setminus I'_{a'} \neq \emptyset$.
		Then there exists a maximal element $p_{i^*}$ of $I_a$ with $p_{i^*} \notin I'_{a'}$. 
		
		Suppose that $\pi_{\Cc \Cc}(u)=\pi_{\Cc \Cc}(v)$.
		Then we have 
		$$\sum\limits_{\stackrel{I \in \{I_1,\ldots,I_a\}}{p_i \in \max(I)}}\xi_I-\sum\limits_{\stackrel{J \in \{J_1,\ldots,J_b\}}{q_i \in \max(J)}}\nu_J
		=\sum\limits_{\stackrel{I' \in \{I'_1,\ldots,I'_{a'}\}}{p_i \in \max(I)'}}\xi'_{I'}-\sum\limits_{\stackrel{J' \in \{J'_1,\ldots,J'_{b'}\}}{q_i \in \max(J')}}\nu'_{J'}.$$
		for all $1 \leq i \leq d$ by comparing the degree of $t_i$. 
		By assumption, $p_{i^*} \notin I'_{a'}$. 
		This means that $p_{i^*} \notin \max(I'_{c'})$ for all $1 \le c' \le a'$.   
		Hence we have
		 $$\sum\limits_{\stackrel{I \in \{I_1,\ldots,I_a\}}{p_{i^*} \in \max(I)}}\xi_I-\sum\limits_{\stackrel{J \in \{J_1,\ldots,J_b\}}{q_{i^*} \in \max(J)}}\nu_J
		 =-\sum\limits_{\stackrel{J' \in \{J'_1,\ldots,J'_{b'}\}}{q_{i^*} \in \max(J')}}\nu'_{J'}\leq 0.$$
		Moreover,  since $p_{i^*}$ is a maximal element of $I_a$, we also have 
		$$\sum\limits_{\stackrel{I \in \{I_1,\ldots,I_a\}}{p_{i^*} \in \max(I)}}\xi_I>0. $$
		Hence there exists an integer $c$ with $1\leq c \leq b$ such that $q_{i^*} \in \max(J_c)$.
		%In particular, $q_{i^*}$ is a maximal element of $J_c$. 
		Therefore we have 
		$x_{\max(I_a)}y_{\max(J_c)} \in \langle{\rm in}_{<_{\Cc \Cc}}(\Gc_{\Cc \Cc}) \rangle$, 
		but this is a contradiction.
\end{proof}

By this proposition, it is clear that the initial ideal $\mathrm{in}_{<_{\Cc \Cc}}(I_{\Gamma(\Cc(P), - \Cc(Q))})$ of the toric ideal $I_{\Gamma(\Cc(P), - \Cc(Q))}$ with respect to the order $<_{\Cc \Cc}$ is squarefree. 
Hence we have 

\begin{Corollary}
$\Gamma(\Cc(P), - \Cc(Q))$ is a normal Gorenstein Fano polytope for any partially ordered sets $P$ and $Q$ with $|P| = |Q| = d$. 
\end{Corollary}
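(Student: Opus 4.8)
The plan is to read off the Corollary as a formal consequence of the Proposition, after noticing that the only genuinely new assertion here is \emph{normality}: the fact that $\Gamma(\Cc(P), -\Cc(Q))$ is always Gorenstein Fano has already been recorded in the Introduction (\cite[Theorem 2.8]{harmony}), so for that half I would simply invoke it. Thus essentially all the work reduces to extracting normality from the Gröbner basis $\Gc_{\Cc\Cc}$ just computed, via the first of the three facts stated before the Proposition (a squarefree initial ideal of the toric ideal forces normality).

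First I would check that $\mathrm{in}_{<_{\Cc\Cc}}(I_{\Gamma(\Cc(P), -\Cc(Q))})$ is squarefree. By the Proposition, $\Gc_{\Cc\Cc}$ is a Gröbner basis, so this initial ideal is generated by the initial monomials of the binomials (vii)--(ix), each of which is its first monomial: $x_{\max(I)}x_{\max(I')}$ for (vii), $y_{\max(J)}y_{\max(J')}$ for (viii), and $x_{\max(I)}y_{\max(J)}$ for (ix). The point to verify is that each of these is a product of two \emph{distinct} variables. In (vii) and (viii) the poset ideals $I, I'$ (resp.\ $J, J'$) are incomparable in $\Jc(P)$ (resp.\ $\Jc(Q)$), and two distinct poset ideals cannot share the same set of maximal elements — since a poset ideal is recovered as the ideal generated by its maximal elements — so $\max(I) \neq \max(I')$ and hence $x_{\max(I)} \neq x_{\max(I')}$; in (ix) the two factors are of different variable types. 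Therefore every generator of the initial ideal is squarefree, and the initial ideal is squarefree.

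Next I would apply the stated normality criterion: since $<_{\Cc\Cc}$ is a monomial order for which the initial ideal of the toric ideal is squarefree, $\Gamma(\Cc(P), -\Cc(Q))$ is normal. Combining this with the known Gorenstein Fano property then yields the Corollary for arbitrary $P, Q$ with $|P| = |Q| = d$. I do not expect a real obstacle at this stage — the substance lies entirely in the Proposition, and the Corollary is a direct corollary in the literal sense. The single step that must not be glossed over is the squarefreeness check, i.e.\ confirming $\max(I) \neq \max(I')$ for incomparable $I \neq I'$; this is the hinge on which ``squarefree initial ideal,'' and hence normality, rests.
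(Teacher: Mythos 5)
Your proposal is correct and follows essentially the same route as the paper: the paper likewise deduces normality from the squarefreeness of $\mathrm{in}_{<_{\Cc\Cc}}(I_{\Gamma(\Cc(P),-\Cc(Q))})$ (which it declares ``clear'' from the Proposition, since the initial monomials of (vii)--(ix) are products of two distinct variables) and combines this with the Gorenstein Fano property already cited in the Introduction. Your extra check that $\max(I)\neq\max(I')$ for distinct poset ideals is a welcome explicit justification of a step the paper leaves implicit.
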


Here, we put
\[
R_{\Oc \Oc} := K[\Oc \Oc]/\mathrm{in}_{<_{\Oc \Oc}}(I_{\Gamma(\Oc(P), - \Oc(Q))}), 
\]
\[
R_{\Oc \Cc} := K[\Oc \Cc]/\mathrm{in}_{<_{\Oc \Cc}}(I_{\Gamma(\Oc(P), - \Cc(Q))}), 
\]
\[
R_{\Cc \Cc} := K[\Cc \Cc]/\mathrm{in}_{<_{\Cc \Cc}}(I_{\Gamma(\Cc(P), - \Cc(Q))}). 
\]
Next, we prove the following. 

\begin{Proposition}
The ring $R_{\Oc \Cc}$ is isomorphic to the ring $R_{\Cc \Cc}$ 
for any partially ordered sets $P$ and $Q$ with $|P| = |Q| = d$. 
Moreover, if $P$ and $Q$ possess a common linear extension, then these rings 
$R_{\Oc \Oc}$, $R_{\Oc \Cc}$ and $R_{\Cc \Cc}$ are isomorphic. 
\end{Proposition}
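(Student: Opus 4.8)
The plan is to write down, for each comparison, an explicit isomorphism of the ambient polynomial rings that relabels variables according to the bijection $I\mapsto\max(I)$ between the nonempty poset ideals and the nonempty antichains of a poset, and then to check that this relabeling carries one squarefree initial ideal exactly onto the other. Since $\Gc_{\Oc\Cc}$ is a Gr\"obner basis of $I_{\Gamma(\Oc(P), - \Cc(Q))}$ by \cite[Theorem 1.1]{orderchain} and $\Gc_{\Cc\Cc}$ is a Gr\"obner basis of $I_{\Gamma(\Cc(P), - \Cc(Q))}$ by the preceding Proposition, the two initial ideals are generated precisely by the first monomials (which are the initial monomials) of the binomials (iv)--(vi) and (vii)--(ix); all of these are squarefree quadratic monomials in the $x$- and $y$-variables, and none involves $z$.

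First I would prove $R_{\Oc\Cc}\cong R_{\Cc\Cc}$, which holds for arbitrary $P$ and $Q$. Because $I\mapsto\max(I)$ is a bijection $\Jc(P)\setminus\{\emptyset\}\to\Ac(P)\setminus\{\emptyset\}$, the $x$-variables of $K[\Oc\Cc]$ and $K[\Cc\Cc]$ are in natural bijection, while the $y$-variables already coincide; hence $\phi\colon K[\Oc\Cc]\to K[\Cc\Cc]$ defined by $\phi(x_I)=x_{\max(I)}$, $\phi(y_{\max(J)})=y_{\max(J)}$ and $\phi(z)=z$ is an isomorphism of polynomial rings. It remains to see that $\phi$ sends $\ini_{<_{\Oc\Cc}}(I_{\Gamma(\Oc(P), - \Cc(Q))})$ onto $\ini_{<_{\Cc\Cc}}(I_{\Gamma(\Cc(P), - \Cc(Q))})$, which I would do by matching generators type by type: $\phi$ carries the initial monomial $x_Ix_{I'}$ of (iv) to $x_{\max(I)}x_{\max(I')}$ of (vii) under the same incomparability condition on the poset ideals $I,I'$; it fixes the monomials $y_{\max(J)}y_{\max(J')}$ common to (v) and (viii); and it carries $x_Iy_{\max(J)}$ of (vi) to $x_{\max(I)}y_{\max(J)}$ of (ix) under the same condition that some index $i$ satisfies $p_i\in\max(I)$ and $q_i\in\max(J)$. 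As $\phi$ thus gives a bijection between the two generating sets of monomials, it descends to the desired ring isomorphism.

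Second, assume $P$ and $Q$ possess a common linear extension. Then $\Gc_{\Oc\Oc}$ is a Gr\"obner basis of $I_{\Gamma(\Oc(P), - \Oc(Q))}$ by \cite[Theorem 2.1]{twin}, so $\ini_{<_{\Oc\Oc}}(I_{\Gamma(\Oc(P), - \Oc(Q))})$ is generated by the first monomials of (i)--(iii). Using the bijection $J\mapsto\max(J)$ on $Q$, define $\psi\colon K[\Oc\Oc]\to K[\Oc\Cc]$ by $\psi(x_I)=x_I$, $\psi(y_J)=y_{\max(J)}$ and $\psi(z)=z$, again an isomorphism of polynomial rings. The identical matching --- now comparing (i)--(iii) with (iv)--(vi) --- shows that $\psi$ carries $\ini_{<_{\Oc\Oc}}$ onto $\ini_{<_{\Oc\Cc}}$, so $R_{\Oc\Oc}\cong R_{\Oc\Cc}$; combined with the first part, the three rings are then all isomorphic.

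The routine part is the generator matching; the point to get right is the bookkeeping of the combinatorial side conditions, namely that incomparability of poset ideals and the existence of a common index $i$ with $p_i\in\max(I)$ and $q_i\in\max(J)$ are phrased identically before and after relabeling, so that $\phi$ and $\psi$ really induce bijections of the monomial generating sets. The one genuinely delicate issue is the necessity of the common-linear-extension hypothesis in the second part: it enters only through the validity of the Gr\"obner basis $\Gc_{\Oc\Oc}$. Indeed, the binomial $x_P y_Q-z^2$ always lies in $I_{\Gamma(\Oc(P), - \Oc(Q))}$, and reducing $x_P y_Q$ to $z^2$ by repeated use of the binomials (iii), each step deleting an index that is maximal in both current poset ideals, is possible exactly when $P$ and $Q$ admit a common linear extension; absent this, $\Gc_{\Oc\Oc}$ is no longer a Gr\"obner basis and the shape of $\ini_{<_{\Oc\Oc}}$ used above is unavailable.
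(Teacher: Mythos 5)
Your proposal is correct and follows essentially the same route as the paper: both identify $R_{\Oc\Oc}$, $R_{\Oc\Cc}$, $R_{\Cc\Cc}$ as quotients by the squarefree quadratic initial ideals coming from the three Gr\"obner bases and then exhibit the variable relabelings $x_I\mapsto x_{\max(I)}$, $y_J\mapsto y_{\max(J)}$ as isomorphisms, using that $I\mapsto\max(I)$ is a bijection and that the defining side conditions are stated identically on both sides. Your extra remark explaining why the common-linear-extension hypothesis is indispensable for the $\Oc\Oc$ case is a welcome addition but does not change the argument.
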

\begin{proof}
From \cite[Theorem 2.1]{twin}, \cite[Theorem 1.1]{orderchain} and Proposition 1.2, we have
\[
R_{\Oc \Oc} \cong \frac{K[\{x_{I}\}_{\emptyset \neq I \in \Jc(P)} \cup 
\{y_{J}\}_{\emptyset \neq J \in \Jc(Q)} \cup \{ z \}]}
{(x_{I}x_{I'}, y_{J}y_{J'}, x_{I}y_{J} \mid I, I', J \ \mathrm{and}\ J' \ \mathrm{satisfy \ (*)}) }, 
\]
\[
R_{\Oc \Cc} \cong \frac{K[\{x_{I}\}_{\emptyset \neq I \in \Jc(P)} \cup 
\{y_{\max(J)}\}_{\emptyset \neq J \in \Jc(Q)} \cup \{ z \}]}
{(x_{I}x_{I'}, y_{\max(J)}y_{\max(J')}, x_{I}y_{\max(J)} \mid I, I', J \ \mathrm{and}\ J' \ \mathrm{satisfy \ (*)}) }, 
\]
\[
R_{\Cc \Cc} \cong \frac{K[\{x_{\max(I)}\}_{\emptyset \neq I \in \Jc(P)} \cup 
\{y_{\max(J)}\}_{\emptyset \neq J \in \Jc(Q)} \cup \{ z \}]}
{(x_{\max(I)}x_{\max(I')}, y_{\max(J)}y_{\max(J')}, x_{\max(I)}y_{\max(J)} \mid I, I', J \ \mathrm{and}\ J' \ \mathrm{satisfy \ (*)}) }, 
\]
where the condition $(*)$ is the following: 
\begin{itemize}
	\item $I$ and $I'$ are poset ideals of $P$ which are incomparable in $\Jc(P)$;
	\item $J$ and $J'$ are poset ideals of $Q$ which are incomparable in $\Jc(Q)$;
	\item There exists $1 \le i \le d$ such that $p_{i}$ is a maximal element of $I$ and $q_{i}$ is a maximal element of $J$. 
\end{itemize}
Hence it is easy to see that 
the ring homomorphism $\phi : R_{\Oc \Cc} \to R_{\Cc \Cc}$ by setting $\phi(x_I) = x_{\max(I)}$, 
$\phi(y_{\max(J)}) = y_{\max(J)}$ and $\phi(z) = z$ is an isomorphism. 
Similarly, if $P$ and $Q$ possess a common linear extension, 
we can see that the ring homomorphism $\phi^{'} : R_{\Oc \Oc} \to R_{\Oc \Cc}$ by setting $\phi^{'}(x_I) = x_I$, 
$\phi^{'}(y_J) = y_{\max(J)}$ and $\phi^{'}(z) = z$ is an isomorphism.
Hence we have $R_{\Oc \Oc} \cong R_{\Oc \Cc} \cong R_{\Cc \Cc}$. 
\end{proof} 

Now, we can prove Theorem 1.1. 
\begin{proof}[Proof of Theorem 1.1]
From \cite[Corollarly 1.2]{orderchain} and Proposition 1.2, we have that 
both $\Gamma(\Oc(P), - \Cc(Q))$ and $\Gamma(\Cc(P), - \Cc(Q))$ are normal. 
Hence the Ehrhart polynomial of $\Gamma(\Oc(P), - \Cc(Q))$ (resp. $\Gamma(\Cc(P), - \Cc(Q))$) 
is equal to the Hilbert polynomial of 
$K[\Gamma(\Oc(P), - \Cc(Q))]$ (resp. $K[\Gamma(\Cc(P), - \Cc(Q))]$). 
By Proposition 1.4, $R_{\Oc \Cc}$ and $R_{\Cc \Cc}$ have the same Hilbert polynomial. 
Hence $K[\Gamma(\Oc(P), - \Cc(Q))]$ and $K[\Gamma(\Cc(P), - \Cc(Q))]$ 
also have the same Hilbert polynomial. 
Therefore we have the desired conclusion. 

If $P$ and $Q$ possess a common linear extension, $\Gamma(\Oc(P), - \Oc(Q))$ is also normal 
from \cite[Corollarly 2.2]{twin}. 
Therefore, by the same argument, we have the desired conclusion. 
\end{proof}

We immidiately obtain the following corollary.
\begin{Corollary}
	For any partially ordered sets $P$ and $Q$ with $|P| = |Q| = d$,
	 we have 
	 $$i(\Gamma(\Oc(P), - \Cc(Q)), n)=i(\Gamma(\Cc(Q), - \Cc(P)), n).$$
	 In particular, these polytopes have the same volume.
\end{Corollary}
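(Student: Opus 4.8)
The plan is to deduce the corollary from Theorem \ref{Ehrhart} together with the elementary observation that the two polytopes being compared are reflections of one another through the origin. Theorem \ref{Ehrhart} already supplies
\[
i(\Gamma(\Oc(P), - \Cc(Q)), n) = i(\Gamma(\Cc(P), - \Cc(Q)), n),
\]
so it would suffice to establish
\[
i(\Gamma(\Cc(P), - \Cc(Q)), n) = i(\Gamma(\Cc(Q), - \Cc(P)), n).
\]

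The key step is to read off the generating sets of the two chain--chain polytopes. By definition $\Gamma(\Cc(P), - \Cc(Q))$ is the convex hull of
\[
\{ \rho(A) \mid \emptyset \neq A \in \Ac(P) \} \cup \{ - \rho(B) \mid \emptyset \neq B \in \Ac(Q) \} \cup \{ {\bf 0} \},
\]
whereas $\Gamma(\Cc(Q), - \Cc(P))$ is the convex hull of
\[
\{ \rho(B) \mid \emptyset \neq B \in \Ac(Q) \} \cup \{ - \rho(A) \mid \emptyset \neq A \in \Ac(P) \} \cup \{ {\bf 0} \}.
\]
I would then consider the linear involution $\iota \colon \RR^{d} \to \RR^{d}$, ${\bf x} \mapsto - {\bf x}$. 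It sends $\rho(A) \mapsto - \rho(A)$ and $- \rho(B) \mapsto \rho(B)$ and fixes ${\bf 0}$, so it carries the generating set of $\Gamma(\Cc(P), - \Cc(Q))$ exactly onto that of $\Gamma(\Cc(Q), - \Cc(P))$. Since taking convex hulls commutes with applying a linear map, this gives $\Gamma(\Cc(Q), - \Cc(P)) = \iota(\Gamma(\Cc(P), - \Cc(Q)))$.

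Finally I would invoke that $\iota$ is represented by $- E_{d} \in \GL_{d}(\ZZ)$, with determinant $(-1)^{d} = \pm 1$; hence $\iota$ is a unimodular transformation and restricts, for every positive integer $n$, to a bijection between $n\,\Gamma(\Cc(P), - \Cc(Q)) \cap \ZZ^{d}$ and $n\,\Gamma(\Cc(Q), - \Cc(P)) \cap \ZZ^{d}$. Therefore the two Ehrhart polynomials coincide, and chaining this with the equality from Theorem \ref{Ehrhart} proves the corollary; the statement on volumes is then immediate, since the volume is the leading coefficient of the Ehrhart polynomial. There is no serious obstacle here: the only thing requiring attention is the bookkeeping verification that $\iota$ matches generators with generators, which is transparent once both polytopes are written out as above.
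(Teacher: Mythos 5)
Your argument is correct and is exactly the (implicit) route behind the paper's ``immediately obtain'': chain the equality $i(\Gamma(\Oc(P),-\Cc(Q)),n)=i(\Gamma(\Cc(P),-\Cc(Q)),n)$ from Theorem \ref{Ehrhart} with the observation that $\Gamma(\Cc(Q),-\Cc(P))=-\Gamma(\Cc(P),-\Cc(Q))$, a unimodular image, so Ehrhart polynomials (and hence volumes) agree. Nothing further is needed.
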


As the end of this section, 
we give an example that $P$ and $Q$ do not have any common linear extension.
\begin{Example}
Let $P = \{p_1 < p_2\}$ and $Q = \{q_2 < q_1\}$ be chains. 
It is clear that $P$ and $Q$ have no common linear extension. 
Then 
\[
i(\Gamma(\Oc(P), - \Oc(Q)), n) = \frac{3}{2} n^2 + \frac{5}{2} n + 1, 
\] 
\[
i(\Gamma(\Oc(P), - \Cc(Q)), n) = i(\Gamma(\Cc(P), - \Cc(Q)), n) = 2 n^2 + 2 n + 1. 
\]
\end{Example}

\section{when are three polytopes smooth?}

In this section, 
we consider the characterization problem of partially ordered sets yield smooth Fano polytopes. 

First, we recall some definitions.  
Let $\Pc \subset \RR^d$ be a Fano polytope.
\begin{itemize}
	\item We call $\Pc$ \textit{centrally symmetric} if $\Pc= -\Pc$. 
	\item We call $\Pc$ \textit{pseudo-symmetric} if there exists a facet $\Fc$ of $\Pc$
	such that $-\Fc$ is also its facet.
	Note that every centrally symmetric polytope is pseudo-symmetric.
	\item A \textit{del Pezzo polytope} of dimension $2k$ is a convex polytope
	$$V_{2k}=\text{conv}(\pm \eb_1,\ldots\pm \eb_{2k},\pm(\eb_1+\cdots +\eb_{2k})).$$
	Note that del Pezzo polytopes are centrally symmetric smooth Fano polytopes.
	\item A \textit{pseudo del Pezzo polytope} of dimension $2k$ is a convex polytope
	$$\tilde{V}_{2k}=\text{conv}(\pm \eb_1,\ldots\pm \eb_{2k},\eb_1+\cdots +\eb_{2k}).$$
	Note that pseudo del Pezzo polytopes are pseudo-symmetric smooth Fano polytopes.
	\item Let us that $\Pc$ \textit{splits} into $\Pc_1$ and $\Pc_2$ if the convex hull of two Fano polytopes
	$\Pc_1 \subset \RR^{d_1}$ and $\Pc_2 \subset \RR^{d_2}$ with $d=d_1+d_2$, i.e., by renumbering
	$$\Pc=\text{conv}(\{(\alpha_1,0),(0,\alpha_2) \in \RR^d : \alpha_1 \in \Pc_1,\alpha_2 \in \Pc_2\}).$$
	Then we write $\Pc=\Pc_1 \oplus \Pc_2$.
\end{itemize}

There is well-known fact on the characterization of centrally symmetric or pseudo-symmetric smooth Fano polytopes.
\begin{itemize}
	\item Any centrally symmetric smooth Fano polytope splits into copies of the closed interval $[-1,1]$ or a del Pezzo polytope \cite{variety}.
	\item Any pseudo-symmetric smooth Fano polytope splits into copies of the closed interval $[-1,1]$ or a del Pezzo polytope or pseudo del Pezzo polytope \cite{class,variety}.
\end{itemize}
	
Let $P$ and $Q$ be partially ordered sets with $|P|=|Q|=d$.
In this section, we consider when each of $\Gamma(\Oc(P), - \Oc(Q))$, $\Gamma(\Oc(P), - \Cc(Q))$ 
and $\Gamma(\Cc(P), - \Cc(Q))$ is smooth.

First, we consider when $\Gamma(\Cc(P), - \Cc(Q))$ is smooth.
For $1 \leq i \leq d$, we set $A_i(P)=\{I \in A(P) \: |A|=i\}.$
\begin{Theorem}
	\label{ccs}
	For $d \geq 2$, let $P$ and $Q$ be partially ordered sets  with $|P|=|Q|=d$.
	Then the following conditions are equivalent:
	\begin{enumerate}
		\item[(i)] $\Gamma(\Cc(P), - \Cc(Q))$ is $\QQ$-factorial;
		\item[(ii)] $\Gamma(\Cc(P), - \Cc(Q))$ is smooth;
		\item[(iii)] $\Gamma(\Cc(P), - \Cc(Q))$ splits into copies of the closed interval $[-1,1]$ or a del Pezzo $2$-polytope or pseudo del Pezzo $2$-polytope;
		\item[(iv)] For any $I_1,I_2 \in A_2(P)$ with $I_1 \neq I_2$,
		$I_1 \cap I_2=\emptyset$ and for any $J_1,J_2 \in A_2(Q)$ with $J_1 \neq J_2$,
		$J_1 \cap J_2=\emptyset$, and for any $I \in A_2(P)$ and for any $J \in A_2(Q)$, $|I \cap J| \neq 1$.
	\end{enumerate}	
\end{Theorem}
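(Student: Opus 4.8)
The plan is to prove the cyclic chain of implications $(ii)\Rightarrow(i)\Rightarrow(iv)\Rightarrow(iii)\Rightarrow(ii)$. By the Corollary above, $\Gamma(\Cc(P),-\Cc(Q))$ is already known to be a Gorenstein Fano polytope, so the origin lies in its interior and I may work with its genuine vertex set. As a preliminary step I would record that every $\rho(A)$ with $\emptyset\neq A\in\Ac(P)$ and every $-\rho(B)$ with $\emptyset\neq B\in\Ac(Q)$ is in fact a vertex: maximizing the functional $\sum_{i\in A}x_i$ cuts out a face lying entirely on the $P$-side, and then minimizing $\sum_{i\notin A}x_i$ on that face isolates $\rho(A)$; the case of $-\rho(B)$ is symmetric. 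The implication $(ii)\Rightarrow(i)$ is immediate, since a smooth Fano polytope is $\QQ$-factorial (simplicial), as recalled in the Introduction.

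For $(i)\Rightarrow(iv)$, which I expect to be the crux, I would argue by contraposition: whenever one of the three clauses of $(iv)$ fails, I exhibit a two-dimensional face carrying four vertices, so the polytope has a non-simplex face and is therefore not simplicial, i.e. not $\QQ$-factorial. If distinct $I_1=\{a,b\},I_2=\{a,c\}\in A_2(P)$ meet in the index $a$, there are two subcases. When $\{p_b,p_c\}$ is not an antichain, the hyperplane $x_b+x_c=1$ supports the square with vertices $\eb_b,\eb_c,\eb_a+\eb_b,\eb_a+\eb_c$; when $\{p_b,p_c\}$ is an antichain, then $\{p_a,p_b,p_c\}$ is a $3$-antichain, $\eb_a+\eb_b+\eb_c$ is a vertex, and $x_a=1$ supports the square $\eb_a,\eb_a+\eb_b,\eb_a+\eb_c,\eb_a+\eb_b+\eb_c$. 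The failure of the second clause is reduced to the first by the linear isomorphism $x\mapsto-x$, which sends $\Gamma(\Cc(P),-\Cc(Q))$ to $\Gamma(\Cc(Q),-\Cc(P))$ and preserves simpliciality. Finally, if $\{p_a,p_b\}\in A_2(P)$ and $\{q_a,q_c\}\in A_2(Q)$ share exactly the index $a$, then $x_b-x_c=1$ supports the quadrilateral $\eb_b,\eb_a+\eb_b,-\eb_c,-\eb_a-\eb_c$. In each case one checks directly that the chosen functional takes value $\le 1$ on every vertex $\pm\rho(\cdot)$, so the face is genuine, and the four listed points are affinely dependent vertices, which rules out simpliciality.

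For $(iv)\Rightarrow(iii)$ I would first note that $(iv)$ forbids antichains of size $\ge 3$ in $P$ or $Q$, since a $3$-antichain yields three pairwise-intersecting $2$-antichains; hence the nonempty antichains are precisely the singletons together with the $2$-antichains. The conditions in $(iv)$ then say that the $2$-antichains of $P$ are pairwise disjoint, those of $Q$ are pairwise disjoint, and a $2$-antichain of $P$ and one of $Q$ are either equal or disjoint. Thus the collection $M$ of all these $2$-element index sets is a matching on $[d]$, partitioning a subset of $[d]$ into pairs with the remaining indices unmatched. Every vertex $\pm\eb_i$, together with $\eb_a+\eb_b$ for a $2$-antichain of $P$ and $-\eb_a-\eb_b$ for a $2$-antichain of $Q$, is supported on a single block of this partition, so $\Gamma(\Cc(P),-\Cc(Q))$ splits as the free sum over the blocks: an unmatched index gives $\conv(\pm\eb_i)=[-1,1]$; a pair that is a $2$-antichain of both $P$ and $Q$ gives $\conv(\pm\eb_a,\pm\eb_b,\pm(\eb_a+\eb_b))$, a del Pezzo $2$-polytope; and a pair that is a $2$-antichain of only one of them gives $\conv(\pm\eb_a,\pm\eb_b,\eb_a+\eb_b)$ or its image under $x\mapsto-x$, a pseudo del Pezzo $2$-polytope.

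The remaining implication $(iii)\Rightarrow(ii)$ rests on the standard fact that a free sum of smooth Fano polytopes is again smooth Fano — the facets of $\Pc_1\oplus\Pc_2$ are the joins of facets of the summands, and unimodularity of the summand facets yields unimodularity of the joins — combined with the smoothness of $[-1,1]$, the del Pezzo and the pseudo del Pezzo $2$-polytopes recorded in the definitions. The main obstacle of the whole argument is concentrated in $(i)\Rightarrow(iv)$: producing, for each of the three ways $(iv)$ can fail, an explicit supporting hyperplane whose face is a quadrilateral, and verifying that the four chosen lattice points are simultaneously vertices of $\Gamma(\Cc(P),-\Cc(Q))$ and lie on that face.
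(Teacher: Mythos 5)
Your proposal is correct and follows essentially the same route as the paper: the same cyclic chain of implications, with (i)$\Rightarrow$(iv) proved by contraposition via explicit supporting hyperplanes (including the identical functional $x_b-x_c=1$ for the mixed $P$/$Q$ case) whose faces carry four affinely dependent vertices, and (iv)$\Rightarrow$(iii) by the same explicit splitting into intervals and (pseudo) del Pezzo $2$-polytopes. The only organizational difference is that you eliminate antichains of size $\geq 3$ directly with the supporting hyperplane $x_a=1$, whereas the paper first rules them out by counting the vertices lying on the facet determined by a maximal chain; both arguments are sound.
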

\begin{proof}
	\textbf{((i) $\Rightarrow$ (iv))}
	Let $p_{i_1} \prec p_{i_2} \prec \cdots \prec p_{i_s}$ be a maximal chain of $P$.
	Then $x_{i_1}+x_{i_2}+\cdots+x_{i_s}=1$ is a facet of $\Cc(P)$,
	in particular, this is a facet of $\Gamma(\Cc(P), - \Cc(Q))$.
	Since $\Gamma(\Cc(P), - \Cc(Q))$ is simplicial,
	this facet is a ($d-1$)-simplex.
	Hence there exist just $d-s$ antichains $I_1,\ldots,I_{d-s} \in A(P)\setminus A_1(P)$ such that
	for each $I_k$, $|\{p_{i_1},p_{i_2},\ldots,p_{i_s}\}\cap I_k|=1$.
	Since for each $j \in [d]\setminus \{p_{i_1},p_{i_2},\ldots,p_{i_s}\}$,
	there exists $i \in \{p_{i_1},p_{i_2},\ldots,p_{i_s}\}$ such that $\{i,j\}$ is an antichain of $P$,
	for each $j \in [d]\setminus \{p_{i_1},p_{i_2},\ldots,p_{i_s}\}$,
	there exists just one $i \in \{p_{i_1},p_{i_2},\ldots,p_{i_s}\}$ such that $\{i,j\}$ is an antichain of $P$.
	Then for $k \geq 3$, $A_k(P)=\emptyset$. 	 
	
	First, we assume that there exist $I_1,I_2 \in A_2(P)$ with $I_1 \neq I_2$ such that
	$I_1 \cap I_2 \neq \emptyset$.
	Let $I_1=\{p_{i_1},p_{i_2}\}$ and $I_2=\{p_{i_1},p_{i_3}\}$.
	Then we know that $\{p_{i_2},p_{i_3}\}$ is not an antichain of $P$.
	Indeed, if $\{p_{i_2},p_{i_3}\}$ is an antichain of $P$, then $\{p_{i_1},p_{i_2},p_{i_3}\}$ is also an antichain of $P$.
	Hence there exists a maximal chain $p_{j_1} \prec p_{j_2} \prec \cdots \prec p_{j_t}$ of $P$ such that $\{p_{i_2},p_{i_3}\} \subset \{p_{j_1},p_{j_2}\ldots,p_{j_t}\}$.
	Then since $\{p_{i_1},p_{i_2}\}$ and $\{p_{i_1},p_{i_3}\}$ are antichains of $P$,
	a facet $x_{j_1}+x_{j_2}+\cdots+x_{j_t}=1$ of $\Gamma(\Cc(P), - \Cc(Q))$ is not a ($d-1$)-simplex.
	
	Next, we assume that for any $I_1,I_2 \in A_2(P)$ with $I_1 \neq I_2$,
	$I_1 \cap I_2=\emptyset$, and for any $J_1,J_2 \in A_2(Q)$ with $J_1 \neq J_2$,
	$J_1 \cap J_2=\emptyset$, and there exist $I \in A_2(P)$ and $J \in A_2(Q)$ such that $|I \cap J| = 1$.
	We let $I=\{p_{i_1},p_{i_2}\}$ and $J=\{q_{i_1},q_{i_3}\}$.
	Then $x_{i_2}-x_{i_3}=1$ is a face of $\Gamma(\Cc(P), - \Cc(Q))$ and this face is not simplex.
	Indeed, we set $\Hc=\{(x_1,\ldots,x_d) \in \RR^d \: x_{i_2}-x_{i_3}=1\}$ and $\Hc^+=\{(x_1,\ldots,x_d) \in \RR^d \: x_{i_2}-x_{i_3}\leq 1\}$.
	Then every vertex of $\Gamma(\Cc(P), - \Cc(Q))$ belongs to $\Hc^+$, and 
	$\rho(\{p_{i_1},p_{i_2}\}),\rho(\{p_{i_2}\}),-\rho(\{q_{i_1},q_{i_3}\}),-\rho(\{q_{i_3}\})$ belong to $\Hc$.
	Since ($\rho(\{p_{i_1},p_{i_2}\})-(-\rho(\{q_{i_3}\})))=(\rho(\{p_{i_2}\})-(-\rho(\{q_{i_3}\})))-(-\rho(\{q_{i_1},q_{i_3}\})-(-\rho(\{q_{i_3}\})))$, this face is not a simplex.
	
	\textbf{((iv) $\Rightarrow$ (iii))}
	We assume that 
	$$A_2(P)=\{\{p_1,p_2\},\ldots,\{p_{2k-1},p_{2k}\},\{p_{2k+1},p_{2k+2}\},\ldots,\{p_{2k+2l-1},p_{2k+2l}\}\},$$
	$$A_2(Q)=\{\{q_1,q_2\},\ldots,\{q_{2k-1},q_{2k}\},\{q_{2k+2l+1},q_{2k+2l+2}\},\ldots,\{q_{2k+2l+2m-1},q_{2k+2l+2m}\}\},$$
	where $k,l$ and $m$ are nonnegative integers with $2k+2l+2m \leq d$.
	Then we have $\Gamma(\Cc(P), - \Cc(Q))=\text{conv}(\pm \eb_1,\ldots,\pm \eb_d,\pm (\eb_1+\eb_2),\ldots,\pm (\eb_{2k-1}+\eb_{2k}),\eb_{2k+1}+\eb_{2k+2},\ldots,\eb_{2k+2l-1}+\eb_{2k+2l},-(\eb_{2k+2l-1}+\eb_{2k+2l+2}),\ldots,-(\eb_{2k+2l+2m-1}+\eb_{2k+2l+2m}))$.
	Hence $\Gamma(\Cc(P), - \Cc(Q))$ splits into copies of the closed interval $[-1,1]$ or a del Pezzo $2$-polytope or pseudo del Pezzo $2$-polytope.
	
	\textbf{((iii) $\Rightarrow$ (ii) $\Rightarrow$(i))}
	Since $\Gamma(\Cc(P), - \Cc(Q))$ splits into copies of the closed interval $[-1,1]$ or a del Pezzo $2$-polytope or pseudo del Pezzo $2$-polytope, $\Gamma(\Cc(P), - \Cc(Q))$ is smooth.
	Moreover, in general, any smooth Fano polytope is simplicial.
\end{proof}

Next, we consider when $\Gamma(\Oc(P), - \Cc(Q))$ is smooth.
\begin{Theorem}
	\label{ocs}
	For $d \geq 2$, let $P$ and $Q$ be partially ordered sets with $|P|=|Q|=d$.
	Then the following conditions are equivalent:
	\begin{enumerate}
		\item[(i)] $\Gamma(\Oc(P), - \Cc(Q))$ is $\QQ$-factorial;
		\item[(ii)] $\Gamma(\Oc(P), - \Cc(Q))$ is smooth;
		\item[(iii)] $I(P)=\{\{p_{i_1}\},\{p_{i_1},p_{i_2}\},\ldots,\{p_{i_1},\ldots,p_{i_d}\}\}$ or \\$I(P)=\{\{p_{i_1}\},\{p_{i_2}\},\{p_{i_1},p_{i_2}\},\ldots,\{p_{i_1},\ldots,p_{i_d}\}\}$, and \\
		$A(Q)=\{\{q_{i_1}\},\{q_{i_2}\},\ldots,\{q_{i_d}\}\}$ or\\
		$A(Q)=\{\{q_{i_1}\},\{q_{i_2}\},\ldots,\{q_{i_d}\},\{q_{i_1},q_{i_2}\}\}$;
	\end{enumerate}	
\end{Theorem}
\begin{proof}
	\textbf{((i) $\Rightarrow$ (iii))}
	We may assume that $p_{i_1}$ is a minimal element of $P$ and $I'(P)=\{\{p_{i_1}\},\{p_{i_1},p_{i_2}\},\ldots,\{p_{i_1},\ldots,p_{i_d}\}\} \subset I(P)$.
	Then $x_{i_1}=1$ is a facet of  $\Oc(P)$, in particular, this is a facet of $\Gamma(\Oc(P), - \Cc(Q))$.
	Since $\Gamma(\Oc(P), - \Cc(Q))$ is simplicial, this facet is  a $(d-1)$-simplex.
	Hence there is no poset ideal $I \in I(P)$ such that $p_{i_1} \in I$ and $I \notin I'(P)$. 
	If there exists $I \in I(P)$ such that $p_{i_1} \notin I$, there exists a minimal element $p_{i_j} \in I$ of $P$.
	Then since $\{\{p_{i_1},p_{i_j}\}\}$ is a poset ideal of $P$, we have $j=2$.
	Hence we know that $I(P)=\{\{p_{i_1}\},\{p_{i_1},p_{i_2}\},\ldots,\{p_{i_1},\ldots,p_{i_d}\}\}$ or $I(P)=\{\{p_{i_1}\},\{p_{i_2}\},\{p_{i_1},p_{i_2}\},\ldots,\{p_{i_1},\ldots,p_{i_d}\}\}$.
	Also, by the proof of Theorem \ref{ccs}, we may assume that for any $J_1,J_2 \in A_2(Q)$ with $J_1 \neq J_2$, $J_1 \cap J_2=\emptyset$.
	
	We assume that $I(P)=\{\{p_{i_1}\},\{p_{i_1},p_{i_2}\},\ldots,\{p_{i_1},\ldots,p_{i_d}\}\}$.
	If $\{q_{i_j},q_{i_k}\}$ is an antichain of $Q$ with $2 \leq j < k$,
	then $x_{i_1}-x_{i_k}=1$ is a face of $\Gamma(\Oc(P), - \Cc(Q))$ and this face is not a simplex.
	Indeed, we set $\Hc_1=\{(x_1,\ldots,x_d) \in \RR^d \: x_{i_1}-x_{i_k}=1\}$ and $\Hc_1^+=\{(x_1,\ldots,x_d) \in \RR^d \: x_{i_1}-x_{i_k}\leq 1\}$.
	Then every vertex of $\Gamma(\Oc(P), - \Cc(Q))$ belongs to $\Hc_1^+$.
	Also, $\rho(\{p_{i_1}\}),\rho(\{p_{i_1},p_{i_2}\}),\ldots,\rho(\{p_{i_1},\ldots,p_{i_{k-1}}\}),-\rho(\{q_{i_k}\})$ and $-\rho(\{q_{i_j},q_{i_k}\})$ belong to $\Hc_1$.
	Since $(-\rho(\{q_{i_k}\})-\rho(\{p_{i_1}\}))=(-\rho(\{q_{i_j},q_{i_k}\})-\rho(\{p_{i_1}\}))$\\$+(\rho(\{p_{i_1},\ldots,p_{i_{j}}\})-\rho(\{p_{i_1}\}))-(\rho(\{p_{i_1},\ldots,p_{i_{j-1}}\})-\rho(\{p_{i_1}\}))$, this face is not a simplex.
	If $\{q_{i_1},q_{i_j}\}$ is an antichain of $Q$ with $3 \leq j$,
	then $-x_{i_1}+2x_{i_2}=1$ is a face of $\Gamma(\Oc(P), - \Cc(Q))$ and this face is not a simplex.
	Indeed, we set $\Hc_2=\{(x_1,\ldots,x_d) \in \RR^d \: -x_{i_1}+2x_{i_2}=1\}$ and $\Hc_2^+=\{(x_1,\ldots,x_d) \in \RR^d \: -x_{i_1}+2x_{i_2}\leq 1\}$.
	Then each vertex of $\Gamma(\Oc(P), - \Cc(Q))$ belongs to $\Hc_2^+$.
	 Also, 
	$\rho(\{p_{i_1},p_{i_2}\}),\ldots,\rho(\{p_{i_1},\ldots,p_{i_{d}}\})$,\\$-\rho(\{q_{i_1}\})$ and $-\rho(\{q_{i_1},q_{i_j}\})$ belong to $\Hc_2$.
	Hence since $-x_{i_1}+2x_{i_2}=1$ has $d+1$ vertices, this face is not a simplex.
	
	We assume that $I(P)=\{\{p_{i_1}\},\{p_{i_2}\},\{p_{i_1},p_{i_2}\},\ldots,\{p_{i_1},\ldots,p_{i_d}\}\}$.
	If $\{q_{i_j},q_{i_k}\}$ is an antichain of $Q$ with $2 \leq j < k$,
	then similarly, $x_{i_1}-x_{i_k}=1$ is a face of $\Gamma(\Oc(P), - \Cc(Q))$ and this face is not a simplex.
	If $\{q_{i_1},q_{i_j}\}$ is an antichain of $Q$ with $3 \leq j$,
	then $x_{i_2}-x_{i_j}=1$ is a face of $\Gamma(\Oc(P), - \Cc(Q))$ and this face is not a simplex.
	Indeed, we set $\Hc_3=\{(x_1,\ldots,x_d) \in \RR^d \: x_{i_2}-x_{i_j}=1\}$ and $\Hc_3^+=\{(x_1,\ldots,x_d) \in \RR^d \: x_{i_2}-x_{i_j}\leq 1\}$.
	Then every vertex of $\Gamma(\Oc(P), - \Cc(Q))$ belongs to $\Hc_1^+$, and 
	$\rho(\{p_{i_2}\}),\rho(\{p_{i_1},p_{i_2}\}),\ldots,\rho(\{p_{i_1},\ldots,p_{i_{j-1}}\}),-\rho(\{q_{i_j}\}),-\rho(\{q_{i_1},q_{i_j}\})$ belong to $\Hc_3$.
	Since $(\rho(\{p_{i_2}\})-\rho(\{p_{i_1},p_{i_2},p_{i_3}\}))=(\rho(\{p_{i_1},p_{i_2}\})-\rho(\{p_{i_1},p_{i_2},p_{i_3}\}))-(-\rho(\{q_{i_j}\})-\rho(\{p_{i_1},p_{i_2},p_{i_3}\}))+(-\rho(\{q_{i_1},q_{i_j}\})-\rho(\{p_{i_1},p_{i_2},p_{i_3}\}))$,
	this face is not a simplex.
	
	\textbf{((iii) $\Rightarrow$ (ii))}
	If $\Pc \subset \RR^d$ is a smooth Fano polytope of dimension $d$, $\Pc'=\text{conv}(\Pc, \eb_1+\eb_2+\cdots+\eb_{d+1},-\eb_{d+1}) \subset \RR^{d+1}$ is also smooth.
	Also, if $d=2$, then $\Gamma(\Oc(P), - \Cc(Q))$ is smooth.
	Hence for $d \geq 2$, we know that $\Gamma(\Oc(P), - \Cc(Q))$ is smooth.
	
	\textbf{((ii) $\Rightarrow$(i))}
	In general, any smooth Fano polytope is simplicial.
\end{proof}

Finally, we consider when $\Gamma(\Oc(P), - \Oc(Q))$ is smooth.
\begin{Theorem}
	\label{oos}
	For $d \geq 2$, let $P$ and $Q$ be partially ordered sets with $|P|=|Q|=d$.
	Assume that $P$ and $Q$ have a common linear extention.
	Then the following conditions are equivalent:
	\begin{enumerate}
		\item[(i)] $\Gamma(\Oc(P), - \Oc(Q))$ is $\QQ$-factorial;
		\item[(ii)] $\Gamma(\Oc(P), - \Oc(Q))$ is smooth;
		\item[(iii)] $I(P)=\{\{p_{i_1}\},\{p_{i_1},p_{i_2}\},\ldots,\{p_{i_1},\ldots,p_{i_d}\}\}$ or \\$I(P)=\{\{p_{i_1}\},\{p_{i_2}\},\{p_{i_1},p_{i_2}\},\ldots,\{p_{i_1},\ldots,p_{i_d}\}\}$, and \\
		$I(Q)=\{\{q_{i_1}\},\{q_{i_1},q_{i_2}\},\ldots,\{q_{i_1},\ldots,q_{i_d}\}\}$ or \\$I(Q)=\{\{q_{i_1}\},\{q_{i_2}\},\{q_{i_1},q_{i_2}\},\ldots,\{q_{i_1},\ldots,q_{i_d}\}\}$.
	\end{enumerate}	
\end{Theorem}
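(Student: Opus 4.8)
The plan is to prove the cycle of implications (i) $\Rightarrow$ (iii) $\Rightarrow$ (ii) $\Rightarrow$ (i), following the pattern of the proofs of Theorems \ref{ccs} and \ref{ocs}. The implication (ii) $\Rightarrow$ (i) is immediate, since every smooth Fano polytope is simplicial and hence $\QQ$-factorial. Thus the real work lies in the structural implication (i) $\Rightarrow$ (iii) and in the explicit smoothness verification (iii) $\Rightarrow$ (ii).

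For (i) $\Rightarrow$ (iii), I would first treat the $P$-side exactly as in the proof of Theorem \ref{ocs}. If $p_{i_1}$ is a minimal element of $P$, then $x_{i_1} = 1$ cuts out a facet of $\Oc(P)$, and since every vertex $-\rho(J)$ as well as the origin satisfies $x_{i_1} \leq 0 < 1$, this remains a facet of $\Gamma(\Oc(P), -\Oc(Q))$ whose vertices are precisely the $\rho(I)$ with $p_{i_1} \in I$. $\QQ$-factoriality forces this facet to be a $(d-1)$-simplex, and the combinatorial argument of Theorem \ref{ocs} then forces $I(P)$ into the chain form or the doubled-minimum form. For the $Q$-side I would exploit the unimodular symmetry $-\Gamma(\Oc(P), -\Oc(Q)) = \Gamma(\Oc(Q), -\Oc(P))$: since $x \mapsto -x$ preserves $\QQ$-factoriality and the existence of a common linear extension is symmetric in $P$ and $Q$, the same argument applied to $\Gamma(\Oc(Q), -\Oc(P))$ forces $I(Q)$ into the analogous form. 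It remains to align the two orderings into the common indices $i_1, \ldots, i_d$ of (iii); here I would use that a poset in doubled-minimum form has its two minimal elements occupying the first two positions of \emph{every} linear extension, so a shared linear extension forces the minimal pairs of $P$ and $Q$ (when present) to coincide, while in the chain case the unique linear extension pins the ordering down.

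For (iii) $\Rightarrow$ (ii), reindex so that $i_k = k$ and pass to the partial-sum coordinates given by the unimodular (lower-triangular, determinant one) substitution $f_k = \eb_1 + \cdots + \eb_k$. Under this change each chain vertex $\rho(\{p_1, \ldots, p_k\})$ becomes $f_k$, its negative becomes $-f_k$, and a doubled minimum contributes the extra vertex $f_2 - f_1$ on the $P$-side and/or $f_1 - f_2$ on the $Q$-side. Hence $\Gamma(\Oc(P), -\Oc(Q))$ splits into copies of the closed interval $[-1,1]$ in the coordinates $f_3, \ldots, f_d$ together with a two-dimensional block in $f_1, f_2$ equal to $\text{conv}(\pm f_1, \pm f_2, \pm(f_1 - f_2))$ when both are doubled, to $\text{conv}(\pm f_1, \pm f_2, f_1 - f_2)$ (or its mirror) when exactly one is doubled, and to $\text{conv}(\pm f_1, \pm f_2)$ when neither is. After the further sign flip $f_2 \mapsto -f_2$ these blocks become respectively a del Pezzo $2$-polytope, a pseudo del Pezzo $2$-polytope, and two intervals, so $\Gamma(\Oc(P), -\Oc(Q))$ splits into copies of $[-1,1]$, del Pezzo $2$-polytopes, and pseudo del Pezzo $2$-polytopes, and is therefore smooth by the structure results recalled above.

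I expect the $Q$-side of (i) $\Rightarrow$ (iii), together with the index alignment, to be the main obstacle: one must verify that the negation symmetry genuinely transports both $\QQ$-factoriality and the relevant facet, and that the common-linear-extension hypothesis forces $P$ and $Q$ to share the single ordering $i_1, \ldots, i_d$ rather than merely each being chain-or-doubled-minimum on its own. An alternative, more computational route would mimic Theorem \ref{ocs} directly, producing explicit non-simplicial faces whenever $I(Q)$ deviates from the prescribed form relative to the ordering fixed by $P$. The coordinate change for (iii) $\Rightarrow$ (ii) is then routine, the only point requiring care being that the \emph{single} transformation $f_k = \eb_1 + \cdots + \eb_k$ simultaneously normalizes both vertex families — which is exactly what the common linear extension guarantees.
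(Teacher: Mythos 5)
Your proof is correct. For (i) $\Rightarrow$ (iii) and (ii) $\Rightarrow$ (i) you follow essentially the same path as the paper: it likewise imports the facet--simplex argument from the proof of Theorem \ref{ocs} to pin down $I(P)$ and $I(Q)$ separately and then invokes the common linear extension to identify the two index sequences (your explicit appeal to the central symmetry $-\Gamma(\Oc(P),-\Oc(Q))=\Gamma(\Oc(Q),-\Oc(P))$ on the $Q$-side is a cleaner justification of a step the paper leaves implicit). The genuine divergence is in (iii) $\Rightarrow$ (ii). The paper argues by induction on $d$: it checks $d=2$ directly and uses that if $\Pc\subset\RR^d$ is a smooth Fano polytope then $\mathrm{conv}(\Pc,\pm(\eb_1+\cdots+\eb_{d+1}))\subset\RR^{d+1}$ is again smooth. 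You instead apply the single unimodular substitution $f_k=\eb_1+\cdots+\eb_k$ and exhibit $\Gamma(\Oc(P),-\Oc(Q))$ explicitly as a free sum of copies of $[-1,1]$ with one two-dimensional block equal, after a sign flip, to $V_2$, $\tilde{V}_2$, or $[-1,1]\oplus[-1,1]$, and then quote the Voskresenskii--Klyachko/Ewald structure results; I checked the three cases and the splitting is as you claim. Both routes are valid. Yours has the advantage of producing the splitting type directly (the exact analogue of condition (iii) of Theorem \ref{ccs}) and in fact already yields the unimodular equivalence of $\Gamma(\Oc(P),-\Oc(Q))$ with $\Gamma(\Cc(P),-\Cc(Q))$ asserted later in Theorem \ref{equi}, whereas the paper's induction is shorter but carries less structural information.
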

\begin{proof}
	\textbf{((i) $\Rightarrow$ (iii))}
	By the proof of Theorem \ref{ocs},
	 We have $I(P)=\{\{p_{i_1}\},\{p_{i_1},p_{i_2}\},$
	 $\ldots,	\{p_{i_1},\ldots,p_{i_d}\}\}$ or $I(P)=\{\{p_{i_1}\},\{p_{i_2}\},\{p_{i_1},p_{i_2}\},\ldots,\{p_{i_1},\ldots,p_{i_d}\}\}$.
	 Also, we have
	$I(Q)=\{\{q_{j_1}\},\{q_{j_1},q_{j_2}\},\ldots,\{q_{j_1},\ldots,q_{j_d}\}\}$ or $I(Q)=\{\{q_{j_1}\},\{q_{j_2}\},\{q_{j_1},q_{j_2}\},$
	$\ldots,\{q_{j_1},\ldots,q_{j_d}\}\}$.
	Since $P$ and $Q$ have a common linear extention, we may assume that for any $1 \leq k \leq d$, $i_k=j_k$.
	
	\textbf{((iii) $\Rightarrow$ (ii))}
	If $\Pc \subset \RR^d$ is a smooth Fano polytope of dimension $d$, $\Pc'=\text{conv}(\Pc,\pm (\eb_1+\eb_2+\cdots+\eb_{d+1})) \subset \RR^{d+1}$ is also smooth.
	Also, if $d=2$, then $\Gamma(\Oc(P), - \Oc(Q))$ is smooth.
	Hence for $d \geq 2$, we know that $\Gamma(\Oc(P), - \Oc(Q))$ is smooth.
	
	\textbf{((ii) $\Rightarrow$(i))}
	In general, any smooth Fano polytope is simplicial.
\end{proof}

%section3

\section{unimodularly equivalence and volume}

Let $\ZZ^{d \times d}$ denote the set of $d \times d$ integral matrices.
Recall that a matrix $A \in \ZZ^{d \times d}$ is {\em unimodular} if $\det (A) = \pm 1$.
Given integral convex polytopes $\Pc$ and $\Qc$ in $\RR^d$ of dimension $d$,
we say that $\Pc$ and $\Qc$ are {\em unimodularly equivalent}
if there exists a unimodular matrix $U \in \ZZ^{d \times d}$
and an integral vector $w$, such that $\Qc=f_U(\Pc)+w$,
where $f_U$ is the linear transformation in $\RR^d$ defined by $U$,
i.e., $f_U({\bf v}) = {\bf v} U$ for all ${\bf v} \in \RR^d$.
Clearly, if $\Pc$ and $\Qc$ are unimodularly equivalent, then
$i(\Pc,n) = i(\Qc,n)$.
Moreover, if $\Pc$ is Fano, then $w=0$ .

Let $P$ and $Q$ be partially ordered sets with $|P|=|Q|=d$.
We consider whether $\Gamma(\Oc(P), - \Oc(Q))$, $\Gamma(\Oc(P), - \Cc(Q))$ and $\Gamma(\Cc(P), - \Cc(Q))$ are unimodularly equivalent when these polytopes are smooth.
When $d=2$ these polytopes are clearly unimodularly equivalent.

For $d\geq 3$
let $P_1,P_2$ be partially ordered sets as follows.
\newline
\begin{picture}(400,150)(10,50)
\put(50,170){$P_1$:}
\put(90,150){\circle*{5}}
\put(90,70){\circle*{5}}
\put(90,30){\circle*{5}}
\put(90,115){\circle*{2}}
\put(90,110){\circle*{2}}
\put(90,105){\circle*{2}}
\put(75,148){$p_d$}
\put(75,68){$p_2$}
\put(75,28){$p_{1}$}
%%%%%%%%%%%%%%%%%%%%%%%%%%%
\put(90,150){\line(0,-1){25}}
\put(90,70){\line(0,1){25}}
\put(90,30){\line(0,1){40}}

\put(150,170){$P_2$:}
\put(190,150){\circle*{5}}
\put(190,70){\circle*{5}}
\put(210,30){\circle*{5}}
\put(170,30){\circle*{5}}
\put(190,115){\circle*{2}}
\put(190,110){\circle*{2}}
\put(190,105){\circle*{2}}
\put(175,148){$q_d$}
\put(175,68){$q_3$}
\put(195,28){$q_2$}
\put(155,28){$q_1$}
%%%%%%%%%%%%%%%%%%%%%%%%%%%
\put(190,150){\line(0,-1){25}}
\put(190,70){\line(0,1){25}}
\put(210,30){\line(-1,2){20}}
\put(170,30){\line(1,2){20}}

%%%%%%%%%%%%%%%%%%%%%%%%%%%
\end{picture}\\
\\
\\
Each of $\Gamma(\Oc(P), - \Oc(Q))$, $\Gamma(\Oc(P), - \Cc(Q))$ and $\Gamma(\Cc(P), - \Cc(Q))$ is smooth if and only if $P,Q \in \{P_1,P_2\}$. 

\begin{Theorem}
	\label{equi}
	For $d \geq 3$, let $P$ and $Q $ be partially ordered sets with $|P|=|Q|=d$.
	Assume that each of $\Gamma(\Oc(P), - \Oc(Q))$, $\Gamma(\Oc(P), - \Cc(Q))$ and $\Gamma(\Cc(P), - \Cc(Q))$ is smooth.
	Then $\Gamma(\Oc(P), - \Oc(Q))$ and $\Gamma(\Cc(P), - \Cc(Q))$ are unimodularly equivalent. However, $\Gamma(\Oc(P), - \Cc(Q)$ is not unimodularly equivalent to these polytopes.
	Moreover, if $P\neq Q$, then $\Gamma(\Oc(Q), - \Cc(P)$ is also smooth and is not unimodularly equivalent to $\Gamma(\Oc(P), - \Cc(Q))$.
\end{Theorem}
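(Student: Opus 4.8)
The plan is to prove Theorem \ref{equi} in three parts, corresponding to its three assertions, after first making the smoothness hypothesis completely explicit. By Theorems \ref{ccs}, \ref{ocs} and \ref{oos}, the simultaneous smoothness of all three polytopes forces $P$ and $Q$ each to be one of the two posets $P_1, P_2$ pictured above (a chain of length $d$, or a chain of length $d-1$ with its bottom element split into two minimal elements). So there are only finitely many combinatorial cases: $(P,Q) \in \{P_1,P_2\}^2$. I would begin by recording the explicit vertex lists of all three polytopes in each case, using $\Jc(P), \Ac(P)$ for these specific posets; for instance when $P = P_1$ the nonempty poset ideals are exactly the initial segments $\{p_1\}, \{p_1,p_2\},\dots$, while the antichains are the singletons plus $\{p_1\}$ shaped pairs, etc.

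\textbf{Unimodular equivalence of $\Gamma(\Oc(P),-\Oc(Q))$ and $\Gamma(\Cc(P),-\Cc(Q))$.} Here I would exhibit an explicit unimodular matrix $U \in \ZZ^{d\times d}$ sending one vertex set onto the other. The natural candidate is the ``difference'' transformation arising from the chain structure: for a chain $P_1$, the map carrying $\rho(\{p_1,\dots,p_k\}) = \eb_1 + \cdots + \eb_k$ to $\eb_k = \rho(\{p_k\})$ is exactly the upper-triangular unimodular change of coordinates $f_U(\eb_k) = \eb_k - \eb_{k-1}$, which converts order-polytope vertices into chain-polytope vertices. The key point is that this same $U$ must simultaneously carry the $-\Oc(Q)$ part to the $-\Cc(Q)$ part; this works precisely because $P$ and $Q$ share a common linear extension (guaranteed since both are among $P_1,P_2$ with compatible labelings), so a single coordinate change handles both halves. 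I would verify $\det U = \pm 1$ and check the vertex correspondence case by case.

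\textbf{Non-equivalence of $\Gamma(\Oc(P),-\Cc(Q))$ to the other two.} Since unimodular equivalence preserves all intrinsic lattice invariants, the cleanest route is to find one such invariant that separates the ``mixed'' polytope from the other two. The most promising is the number of vertices, or equivalently the count of facets, or the normalized volume; by Theorem \ref{Ehrhart} all three share the \emph{same} Ehrhart polynomial, so volume will \emph{not} distinguish them, which means I must use a finer invariant. I would instead count vertices: $\Gamma(\Oc(P),-\Oc(Q))$ and $\Gamma(\Cc(P),-\Cc(Q))$ have equal vertex numbers (by the equivalence just established), whereas the mixed polytope, having $\Oc$-type vertices on one side and $\Cc$-type on the other, has a different count — one checks directly that for $P_1$ the order polytope contributes $d$ vertices but the chain polytope contributes more, breaking the symmetry that the matched pairs $(\Oc,\Oc)$ and $(\Cc,\Cc)$ enjoy. \textbf{This vertex-counting step is the main obstacle}, since one must carefully tabulate $|\Jc|$ versus $|\Ac|$ for each of $P_1, P_2$ and confirm the mixed total genuinely differs in every admissible case.

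\textbf{The asymmetry $P \neq Q$.} For the final claim, when $P \neq Q$ the unordered pair $\{P,Q\} = \{P_1,P_2\}$, so swapping roles gives $\Gamma(\Oc(Q),-\Cc(P))$, which is again one of the smooth polytopes (smoothness of the mixed polytope by Theorem \ref{ocs} is symmetric in the required structural conditions on $P$ and $Q$ separately). To see it is not equivalent to $\Gamma(\Oc(P),-\Cc(Q))$, I would again apply a lattice invariant, now exploiting that $\Oc(P_1)$ and $\Oc(P_2)$ (resp.\ $\Cc$) have genuinely different facet structures, so the two mixed polytopes differ in, say, the number of vertices lying on a distinguished facet, or in the $f$-vector. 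The whole argument reduces to a finite check over $\{P_1,P_2\}$, so once the vertex/facet data is tabulated the conclusions follow by inspection.
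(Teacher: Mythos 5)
Your reduction to the finite case $(P,Q)\in\{P_1,P_2\}^2$ and your construction of the unimodular equivalence $\Gamma(\Oc(P),-\Oc(Q))\cong\Gamma(\Cc(P),-\Cc(Q))$ via the triangular change of coordinates $\eb_k\mapsto\eb_k-\eb_{k-1}$ are sound and consistent with the paper (which asserts this equivalence without writing the matrix). The genuine gap is in your second step. The invariant you propose --- the number of vertices --- provably cannot distinguish $\Gamma(\Oc(P),-\Cc(Q))$ from the other two polytopes: for \emph{every} finite poset the map $I\mapsto\max(I)$ is a bijection from $\Jc(P)$ onto $\Ac(P)$, so $|\Jc(P)|=|\Ac(P)|$ and all three polytopes have exactly $|\Jc(P)|+|\Jc(Q)|$ nonzero spanning points, the same count in each case. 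Your parenthetical claim that for $P_1$ ``the chain polytope contributes more'' vertices than the order polytope is false: both contribute $d$. What actually separates the mixed polytope is a symmetry invariant, not a counting one: the paper observes that $\Gamma(\Oc(P),-\Oc(Q))$ and $\Gamma(\Cc(P),-\Cc(Q))$ are centrally symmetric (when $P=Q$) or pseudo-symmetric, and more precisely that the quantity $|\{v\in V(\Pc):\, -v\in V(\Pc)\}|$ --- preserved by any unimodular map fixing the origin --- takes a different value on $\Gamma(\Oc(P),-\Cc(Q))$. You would need to replace your vertex count by this (or a comparable) invariant for the argument to go through.

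The final claim, that $\Gamma(\Oc(P),-\Cc(Q))$ and $\Gamma(\Oc(Q),-\Cc(P))$ are inequivalent when $P\neq Q$, is also not settled by your sketch. You defer to ``the $f$-vector'' or to vertex counts on a distinguished facet, but you do not verify that any such invariant actually differs between the two polytopes, and the paper's own treatment suggests the obvious ones do not: it instead assumes a unimodular matrix $U$ exists, pins down its first two rows from the possible images of $\pm\eb_1$ and $\pm(\eb_1+\eb_2)$, and derives a contradiction in each of four subcases using that all vertex coordinates lie in $\{-1,0,1\}$. Without either exhibiting a genuinely distinguishing invariant or carrying out such a matrix argument, this part of your proof remains incomplete.
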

\begin{proof}
	Recall each of $\Gamma(\Oc(P), - \Oc(Q))$, $\Gamma(\Oc(P), - \Cc(Q))$ and $\Gamma(\Cc(P), - \Cc(Q))$ is smooth if and only if $P,Q \in \{P_1,P_2\}$. 
	Hence we should consider the following $4$ cases.

	\textbf{(The case $P=Q=P_1$)} %$\Gamma(\Oc(P), - \Oc(P))$, $\Gamma(\Oc(P), - \Cc(P))$ and $\Gamma(\Cc(P), - \Cc(P))$ are smooth
%and  these  polytopes have same Ehrhart polynomial,
%$$i((\Gamma(\Oc(P), - \Oc(P))),n)=i((\Gamma(\Oc(P), - \Cc(P))),n)=i((\Gamma(\Cc(P), - \Cc(P))),n),$$
%Then the normalized volume of these polytopes  and the number of facets of these polytopes equal $2^d.$
$\Gamma(\Oc(P), - \Oc(Q))$ and $\Gamma(\Cc(P), - \Cc(Q))$ are unimodularly equivalent, in particular, these polytopes are centrally symmetric.
However,
since $\Gamma(\Oc(P), - \Cc(Q))$ is not centrally symmetric, $\Gamma(\Oc(P), - \Cc(Q))$ is not unimodularly equivalent to these polytopes. 

 %$\Gamma(\Oc(Q), - \Oc(Q))$, $\Gamma(\Oc(Q), - \Cc(Q))$ and $\Gamma(\Cc(Q), - \Cc(Q))$ are smooth 
%and  these  polytopes have same Ehrhart polynomial,
% we have 
%$$i((\Gamma(\Oc(Q), - \Oc(Q))),n)=i((\Gamma(\Oc(Q), - \Cc(Q))),n)=i((\Gamma(\Cc(Q), - \Cc(Q))),n),$$
\textbf{(The case $P=Q=P_2$)}
%Then the normalized volume of these polytopes  and the number of facets of these polytopes equal $6 \cdot 2^{d-2}.$
Similarly,
$\Gamma(\Oc(P), - \Oc(Q))$ and $\Gamma(\Cc(P), - \Cc(Q))$ are unimodularly equivalent, and $\Gamma(\Oc(P), - \Cc(Q)$ is not unimodularly equivalent to these polytopes.

 %$\Gamma(\Oc(P), - \Oc(Q))$, $\Gamma(\Oc(P), - \Cc(Q))$ and $\Gamma(\Cc(P), - \Cc(Q))$ are smooth 
%and  these  polytopes have same Ehrhart polynomial,
%$$i((\Gamma(\Oc(P), - \Oc(Q))),n)=i((\Gamma(\Oc(P)), - \Cc(Q))),n)=i((\Gamma(\Cc(P), - \Cc(Q))),n),$$
\textbf{(The case $P=P_1$ and $Q=P_2$)}
%Then the normalized volume of these polytopes  and the number of facets of these polytopes equal $5 \cdot 2^{d-2}.$
$\Gamma(\Oc(P), - \Oc(Q))$ and $\Gamma(\Cc(P), - \Cc(Q))$ are unimodularly equivalent, in particular, these polytopes are pseudo-symmetric.
However, $\Gamma(\Oc(P), - \Cc(Q))$ is not unimodularly equivalent to these polytopes, since 
$|\{v \in V(\Gamma(\Oc(P), - \Cc(Q)) \: -v \in V(\Gamma(\Oc(P), - \Cc(Q))\}| \neq |\{v \in V(\Gamma(\Oc(P), - \Oc(Q)) \: -v \in V(\Gamma(\Oc(P), - \Oc(Q))\}|$, 
where we write $V(\Pc)$ for the vertex set of a polytope $\Pc$. 

 %$\Gamma(\Oc(P), - \Cc(Q))$ and $\Gamma(\Oc(Q), - \Cc(P))$ are smooth %and we have 
%$$i((\Gamma(\Oc(P), - \Cc(Q))),n)=i((\Gamma(\Oc(Q)), - \Cc(P))),n).$$
%and  these  polytopes have same Ehrhart polynomial.

\textbf{(The case $P=P_2$ and  $Q=P_1$)}
%Then $\Gamma(\Oc(P), - \Cc(Q)$ is not unimodularly equivalent to other polytopes.
Similarly,  
$\Gamma(\Oc(P), - \Cc(Q))$ is not unimodularly equivalent to
$\Gamma(\Oc(P), - \Oc(Q))$ and $\Gamma(\Cc(P), - \Cc(Q))$.
Moreover,
$\Gamma(\Oc(P), - \Cc(Q)$ and $\Gamma(\Oc(Q), - \Cc(P)$ are not unimodularly equivalent.
Indeed, we assume that these polytopes are unimodularly equivalent.
Then there exists a unimodular matrix $U \in \ZZ^{d \times d}$
such that $\Gamma(\Oc(P), - \Cc(Q))=f_U(\Gamma(\Oc(Q), - \Cc(P))$.
Also for  $v \in \{\pm \eb_1,\pm(\eb_1+\eb_2) \}$, there exists  $v' \in \{\pm \eb_1,\pm \eb_2\}$ such that $f(v)=v'$.

If $f_U(\eb_1)=\eb_1$ and $f_U(\eb_1+\eb_2)=\eb_2$, we have 
\begin{displaymath}
U=\left(  
\begin{array}{ccccc}
1 & 0   & 0 & \cdots   & 0 \\
-1  & 1 & 0 &  \cdots  & 0 \\
u_{31}  & u_{32} & u_{33} &  \cdots  & u_{3d} \\
\vdots & \vdots & \vdots  &  \ddots   & \vdots  \\
u_{d1}  & u_{d2} & u_{d3} &  \cdots  & u_{dd} \\
\end{array}
\right)
\in \ZZ^{d \times d}.
\end{displaymath}
Then $f(-\eb_2)=\eb_1-\eb_2 \notin V(\Gamma(\Oc(P), - \Cc(Q)))$.

If $f_U(\eb_1)=\eb_1$ and $f_U(\eb_1+\eb_2)=-\eb_2$, we have 
\begin{displaymath}
U=\left(  
\begin{array}{ccccc}
1 & 0   & 0 & \cdots   & 0 \\
-1  & -1 & 0 &  \cdots  & 0 \\
u_{31}  & u_{32} & u_{33} &  \cdots  & u_{3d} \\
\vdots & \vdots & \vdots  &  \ddots   & \vdots  \\
u_{d1}  & u_{d2} & u_{d3} &  \cdots  & u_{dd} \\
\end{array}
\right)
\in \ZZ^{d \times d}.
\end{displaymath}
Then  $f(\eb_1+\eb_2+\eb_3)=(u_{31},u_{32}-1,u_{33},\ldots,u_{3d})$ and $f(-\eb_3)=(-u_{31},\ldots,-u_{3d})$.
Since $\Gamma(\Oc(P), - \Cc(Q)$ is a ($-1,0,1$)-polytope, $u_{32}=0$ or $1$.
Then $f(\eb_1+\eb_2+\eb_3) = -\eb_2$ or $f(-\eb_3) = -\eb_2 $, contradiction.

If $f_U(\eb_1)=-\eb_1$ and $f_U(\eb_1+\eb_2)=\eb_2$, we have 
\begin{displaymath}
U=\left(  
\begin{array}{ccccc}
-1 & 0   & 0 & \cdots   & 0 \\
1  & 1 & 0 &  \cdots  & 0 \\
u_{31}  & u_{32} & u_{33} &  \cdots  & u_{3d} \\
\vdots & \vdots & \vdots  &  \ddots   & \vdots  \\
u_{d1}  & u_{d2} & u_{d3} &  \cdots  & u_{dd} \\
\end{array}
\right)
\in \ZZ^{d \times d}.
\end{displaymath}
Then  $f(\eb_1+\eb_2+\eb_3)=(u_{31},u_{32}+1,u_{33},\ldots,u_{3d})$ and $f(-\eb_3)=(-u_{31},\ldots,-u_{3d})$.
Since $\Gamma(\Oc(P), - \Cc(Q))$ is a ($-1,0,1$)-polytope, $u_{32}=0$ or $-1$.
Then $f(\eb_1+\eb_2+\eb_3) = \eb_2$ or $f(-\eb_3) = \eb_2 $, contradiction.

If $f_U(\eb_1)=-\eb_1$ and $f_U(\eb_1+\eb_2)=-\eb_2$, we have 
\begin{displaymath}
U=\left(  
\begin{array}{ccccc}
-1 & 0   & 0 & \cdots   & 0 \\
1  & -1 & 0 &  \cdots  & 0 \\
u_{31}  & u_{32} & u_{33} &  \cdots  & u_{3d} \\
\vdots & \vdots & \vdots  &  \ddots   & \vdots  \\
u_{d1}  & u_{d2} & u_{d3} &  \cdots  & u_{dd} \\
\end{array}
\right)
\in \ZZ^{d \times d}.
\end{displaymath}
Then $f(-\eb_2)=-\eb_1+\eb_2 \notin V(\Gamma(\Oc(P), - \Cc(Q)))$.

Therefore,  $\Gamma(\Oc(P), - \Cc(Q)$ and $\Gamma(\Oc(Q), - \Cc(P)$ are not unimodularly equivalent.
\end{proof}

By Theorems \ref{Ehrhart} and \ref{equi},
the following corollary immidiately follows.
\begin{Corollary}
	For any $d \geq 3$, there exist smooth Fano polytopes $\Pc$ and
	$\Qc$ such that the following conditions satisfied:
	\begin{itemize}
		\item $\Pc$ and $\Qc$ have same Ehrhart polynomial.
		\item $\Pc$ and $\Qc$ are not unimodularly equivalent.
	\end{itemize}
\end{Corollary}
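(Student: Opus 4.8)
The plan is to read off the required example directly from the two main theorems, so no new polytope computations are needed. Fix $d \geq 3$ and take $P = Q = P_1$, the $d$-element chain $p_1 < p_2 < \cdots < p_d$ displayed just before Theorem \ref{equi}. Since $P = Q$, the two posets trivially possess a common linear extension (any linear extension of $P_1$ serves for both). Moreover $P, Q \in \{P_1, P_2\}$, so by the remark preceding Theorem \ref{equi} each of the three polytopes $\Gamma(\Oc(P), - \Oc(Q))$, $\Gamma(\Oc(P), - \Cc(Q))$ and $\Gamma(\Cc(P), - \Cc(Q))$ is a smooth Fano polytope.

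Next I would set $\Pc = \Gamma(\Cc(P), - \Cc(Q))$ and $\Qc = \Gamma(\Oc(P), - \Cc(Q))$; both are smooth Fano polytopes of dimension $d$ by the previous paragraph. Because $P$ and $Q$ admit a common linear extension, Theorem \ref{Ehrhart} gives
\[
i(\Gamma(\Oc(P), - \Oc(Q)), n) = i(\Gamma(\Oc(P), - \Cc(Q)), n) = i(\Gamma(\Cc(P), - \Cc(Q)), n),
\]
so in particular $i(\Pc, n) = i(\Qc, n)$, i.e.\ the two polytopes share the same Ehrhart polynomial. On the other hand, since all three polytopes are smooth, Theorem \ref{equi} asserts that $\Gamma(\Oc(P), - \Cc(Q))$ is \emph{not} unimodularly equivalent to $\Gamma(\Cc(P), - \Cc(Q))$; that is, $\Pc$ and $\Qc$ are not unimodularly equivalent.

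Combining these two points produces a pair of smooth Fano polytopes with identical Ehrhart polynomials that fail to be unimodularly equivalent, which is exactly the assertion of the corollary. I do not expect any genuine obstacle here: the only point requiring care is that the hypotheses of the two theorems hold simultaneously, and this is arranged precisely by choosing $P = Q \in \{P_1, P_2\}$, which makes the common-linear-extension hypothesis of Theorem \ref{Ehrhart} and the smoothness hypothesis of Theorem \ref{equi} hold at once. (Taking instead $\Pc = \Gamma(\Oc(P), - \Oc(Q))$ would work equally well, since by Theorem \ref{equi} it is unimodularly equivalent to $\Gamma(\Cc(P), - \Cc(Q))$ and hence also inequivalent to $\Qc$.) As $P_1$ is defined for every $d \geq 3$, this construction yields the desired example in each such dimension, completing the proof.
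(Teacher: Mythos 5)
Your proposal is correct and follows essentially the same route as the paper, which derives the corollary immediately from Theorems \ref{Ehrhart} and \ref{equi} by taking $P,Q\in\{P_1,P_2\}$ so that all three polytopes are smooth, Ehrhart-equal, and $\Gamma(\Oc(P),-\Cc(Q))$ is not unimodularly equivalent to the other two. One small simplification: for your chosen pair $\Pc=\Gamma(\Cc(P),-\Cc(Q))$, $\Qc=\Gamma(\Oc(P),-\Cc(Q))$ the common-linear-extension hypothesis is not even needed, since the first part of Theorem \ref{Ehrhart} gives $i(\Gamma(\Oc(P),-\Cc(Q)),n)=i(\Gamma(\Cc(P),-\Cc(Q)),n)$ unconditionally.
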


Let $\Pc$ be an integral convex polytope of dimension $d$.
We write  $\Vol(\Pc)$ for the \textit{normalized volume} of $\Pc$;
it is equal to $d!$ times the usual Euclidean volume.
It is known that
if $\Pc_1$ is a $d_1$-dimensional Gorenstein Fano polytope in $\RR^{d_1}$ 
and $\Pc_2$ is a $d_2$-dimensional integral convex polytope in $\RR^{d_2}$
with $\bf{0} \in \Pc_1\setminus \partial\Pc_1$, then
$$\text{Vol}(\Pc_1\oplus \Pc_2)=\text{Vol}(\Pc_1)\text{Vol}(\Pc_2)$$
(see \cite{Braun}).
We let $l,m,n$ be nonnegative integers and 
$$\Pc=(\oplus_{l} L) \oplus (\oplus_m \tilde{V}_2) \oplus (\oplus_n V_2),$$
where $L$ is the closed interval $[-1,1]$.
Since $\Vol(L)=2,\Vol(\tilde{V}_2)=5$ and $\Vol(V_2)=6$, we have $\Vol(\Pc)=2^l \cdot 5^m \cdot 6^n.$

Finally, we consider the volume of each of $\Gamma(\Oc(P), - \Oc(Q))$, $\Gamma(\Oc(P), - \Cc(Q))$ and $\Gamma(\Cc(P), - \Cc(Q))$ when these polytopes are smooth.

\begin{Example}{\em
(i)	Let $P=Q=P_1$.
Then $\Gamma(\Cc(P), - \Cc(Q)$ is unimodularly equivalent to
$\oplus_d L$.
Hence 
we know the normalized volume of each of $\Gamma(\Oc(P), - \Oc(Q))$, $\Gamma(\Oc(P), - \Cc(Q))$ and $\Gamma(\Cc(P), - \Cc(Q))$ is equal to $2^d$ by Theorem \ref{Ehrhart}.

(ii)Let $P=Q=P_2$.
Then $\Gamma(\Cc(P), - \Cc(Q)$ is unimodularly equivalent to
$(\oplus_{d-2} L)\oplus V_2$.
Hence 
the normalized volume of each of $\Gamma(\Oc(P), - \Oc(Q))$, $\Gamma(\Oc(P), - \Cc(Q))$ and $\Gamma(\Cc(P), - \Cc(Q))$ is equal to $2^{d-2}\cdot 6$.

(iii) Let $P=P_1$ and $Q=P_2$.
Then $\Gamma(\Cc(P), - \Cc(Q)$ is unimodularly equivalent to
$(\oplus_{d-2} L)\oplus \tilde{V}_2$.
Hence 
the normalized volume of each of $\Gamma(\Oc(P), - \Oc(Q))$, $\Gamma(\Oc(P), - \Cc(Q))$ and $\Gamma(\Cc(P), - \Cc(Q))$ is equal to $2^{d-2}\cdot 5$.
In particular, the normalized volume of $\Gamma(\Oc(Q), - \Oc(P))$ is also $2^{d-2}\cdot 5$.
}
\end{Example}

\end{document}